\newtheorem{theorem}{Theorem}[section]
\newtheorem{proposition}[theorem]{Proposition}
\newtheorem{lemma}[theorem]{Lemma}
\numberwithin{equation}{section}
\newcommand {\rd}{\mathrm{d}}
\newcommand{\eps}{\varepsilon}
\newcommand{\R}{{\mathbb{R}}}
\newcommand{\N}{{\mathbb{N}}}
\newcommand{\Z}{{\mathbb{Z}}}
\renewcommand{\P}{{\mathbb{P}}}
\newcommand{\E}{{\mathbb{E}}}
\newcommand{\C}{\mathcal{C}}
\newcommand{\A}{\mathcal{A}}
\renewcommand{\L}{\mathcal{L}}
\newcommand{\D}{\mathcal{D}}
\def\restriction#1#2{\mathchoice
              {\setbox1\hbox{${\displaystyle #1}_{\scriptstyle #2}$}
              \restrictionaux{#1}{#2}}
              {\setbox1\hbox{${\textstyle #1}_{\scriptstyle #2}$}
              \restrictionaux{#1}{#2}}
              {\setbox1\hbox{${\scriptstyle #1}_{\scriptscriptstyle #2}$}
              \restrictionaux{#1}{#2}}
              {\setbox1\hbox{${\scriptscriptstyle #1}_{\scriptscriptstyle #2}$}
              \restrictionaux{#1}{#2}}}
\def\restrictionaux#1#2{{#1\,\smash{\vrule height .8\ht1 depth .85\dp1}}_{\,#2}} 
\begin{document}
\title{Equilibration and diffusion for a dynamical Lorentz gas}

\author[E. Soret]{\'{E}milie Soret}

\date{\today}
\address{IEMN UMR CNRS 8520, Avenue Henri Poincaré, 59491 Villeneuve-d'Ascq}
\address{IRCICA CNRS 3024, 50 Avenue  Halley, 59650 Villeneuve-d'Ascq, France.}
\email{emilie.soret@ircica.univ-lille1.fr}

\maketitle 


\begin{abstract} 
We consider a model of a \textit{dynamical} Lorentz gaz: a single particle is moving  in $\R^d$ through an array of fixed and soft 
scatterers each possessing an internal degree of freedom coupled to the particle.  Assuming the initial velocity is sufficiently high 
and modelling the parameters of the scatterers as random variables, we describe the evolution of the kinetic energy of the particle by 
a Markov chain for which each step corresponds to a collision.  We show that the momentum distribution of the particle approaches a 
Maxwell-Boltzmann distribution with effective temperature $T$ such that $k_BT$ corresponds to an average of the scatterers' 
kinetic energy. 
\end{abstract}

\section{Introduction}\label{Sec:Intro}

We study a class of Hamiltonian systems referred to as \textit{dynamical} Lorentz gases and introduced in 
\cite{equilibration,adiabatic}. These models describe the motion of a single particle through an array of independent scatterers, each 
possessing an internal degree of freedom to which the particle is locally coupled. In ~\cite{equilibration}, it is argued that the particle 
momentum distribution $\rho(t)$ will converge, asymptotically in time, to a Maxwell-Boltzmann thermal equilibrium distribution 
characterized by a temperature that is determined by the energy distribution of the individual scatterers . This convergence holds in 
a suitable parameter range corresponding to a weak coupling limit and for an arbitrary distribution $\rho_0(t)$ of sufficiently large 
average mean speed. This result holds even if initially the scatterers are not in thermal equilibrium. In this paper, we provide a 
rigorous proof of this result and identify conditions on the parameter range for which it holds. 
The  original, fully Hamiltonian model consists of a particle-scatterer system which obeys to the following laws of motion :
\begin{equation}\label{eq:motionlaw}
\left\lbrace 
\begin{aligned}
\ddot{q}(t)&=-\alpha \sum_{i\in \Z^d} \eta(Q_i(t))\nabla \sigma(q(t)-r_i),\\
M\ddot{Q}_i(t)+U'(Q_i(t))&=-\tilde{\alpha} \eta '(Q_i(t))\sigma(q(t)-r_i).
\end{aligned}
\right.
\end{equation}
In these equations, $q(t)\in \R^d$, $d\geq 2$ represents the position of the particle at time $t$, and $Q_i\in \R$ is the displacement 
of the internal degree of freedom associated to a scatterer centred at a fixed point $r_i\in \R^d$, $i\in \Z^d$. One may think for example of 
\begin{equation}\label{eq:example_ri}
r_i=i(1)e_1+\cdots +i(d) e_d,
\end{equation}
 with $e_1,\cdots,e_d$ a basis of $\R^d$, and $i(k)$ the $k$-th coordinate of $i$ (see Figure~\ref{fig:traj_part}). The points $r_i$ 
 then lie on a lattice. Another example could be $r_i=i(1)e_1+\cdots +i(d) e_d+\eta_i$, where the $\eta_i$ are chosen in a unit cell of 
 the lattice. In what follows the precise geometry of the scatterers will not play any role, as we will explain in details below. 

\begin{figure}
\centering
\includegraphics[scale=0.15]{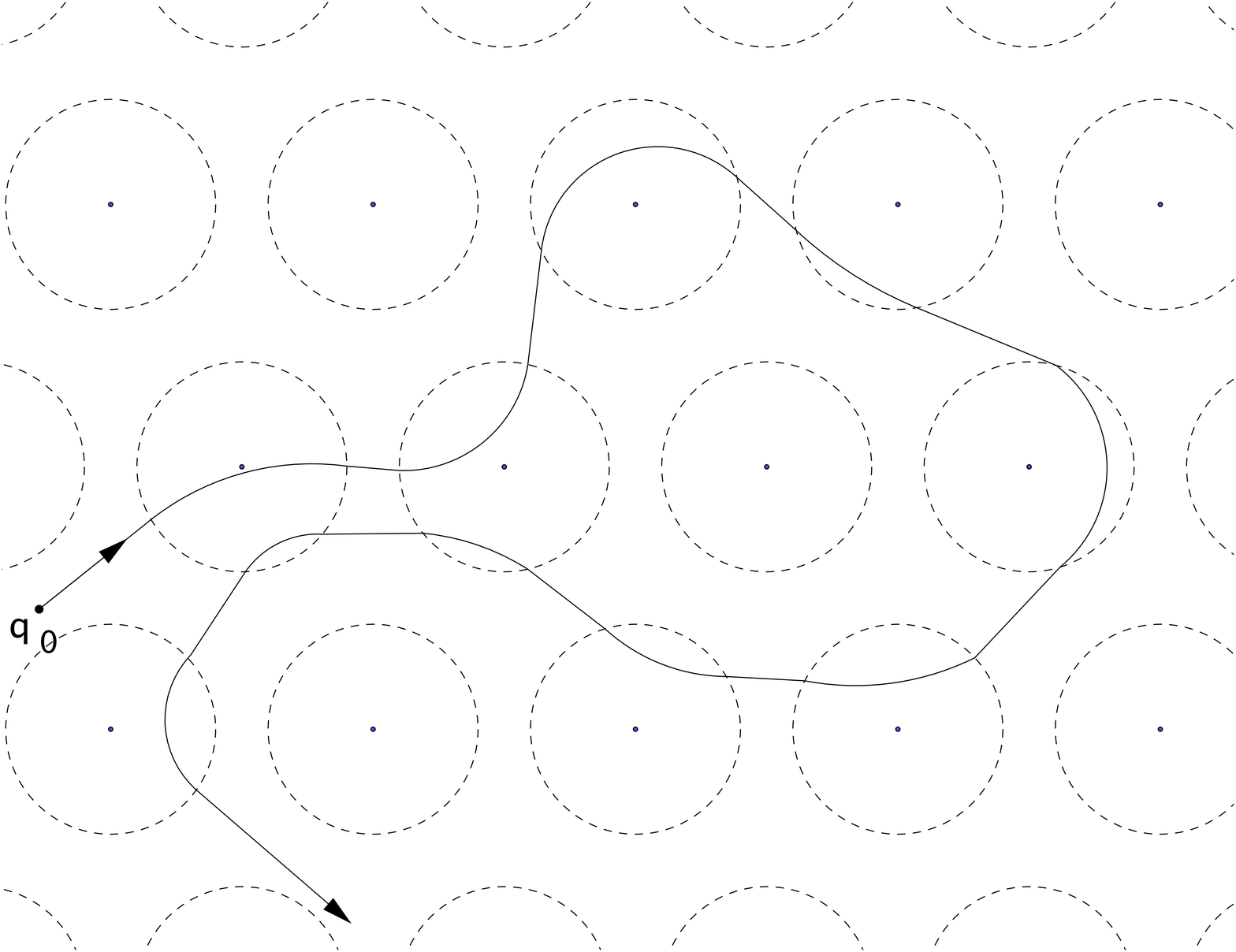}
\caption{Example of path in $\R^2$. Here the points $r_i$  lie on $\Z^2$. The path of 
the particle presents two recollisions.   \label{fig:traj_part}}
\end{figure}

We assume that the scatterers' centres are distributed on $\R^d$ with finite horizon, which means that whatever is the direction of the particle, it cannot move indefinitely without hitting a scatterer. This hypothesis is primordial for our model, indeed if the horizon is not finite, we cannot describe the complete model by a Markovian one as we will do in Section~\ref{Sec:MarkovModel}.

Both the potential $U$ which controls the non-coupled dynamics of the degree of freedom of the scatterers and the coupling function $
\eta$ are supposed to be $\C^\infty(\R)$. More particularly, we assumed that the coupling function $\eta$ is linear and that $U$ 
has a polynomial growth:
\begin{equation}
U(Q)\sim|Q|^r \qquad \text{and}\qquad |\eta(Q)|\sim Q, \qquad r\geq 1.
\end{equation}
Hence, the potential $U$ is confining. 
The form factor $\sigma$ appearing in the interaction term satisfies $\|\sigma\|_\infty\leq 1$. We assume that it is rotationally 
invariant and with compact support in the sphere $B(0,1/2)$. The values $\alpha,\tilde{\alpha}>0$ are coupling constants and $M$ is 
the ratio of the mass associated to the internal degree of freedom over the mass of the particle. All these parameters are 
dimensionless. 

The case $\tilde{\alpha}=0$ is studied in \cite{aguer2010,dbs2015,adblp}. It corresponds to a system in which the passage of the 
particle does not affect the evolution on the environment. We call such  model \textit{inert} (in opposition to \textit{dynamical}). As 
there is no mechanism of dissipation of the energy of the particle, the kinetic energy of the latter grows with the time \cite{dbs2015}. 
This phenomenon is called \textit{stochastic acceleration}. In that case, the particle momentum distribution never approaches an 
equilibrium state, and a fortiori not a thermal equilibrium.

Another interesting case is $\tilde{\alpha}=\alpha$, which is the one here. The particle can be seen as a small perturbation of the 
environment, it leads to a loss of average energy which acts as a source of dynamical friction 
\cite{chandra43a,chandra43b,chandra43c}, and stochastic acceleration does not occur.

In the following, we assume that the two coupling constants are equal, $\alpha=\tilde{\alpha}$. As we consider a linear coupling, the 
law of motion \eqref{eq:motionlaw} becomes
\begin{equation}\label{eq:motionlaw2}
\left\lbrace 
\begin{aligned}
\ddot{q}(t)&=-\alpha \sum_{i\in \Z^d} Q_i(t)\nabla \sigma(q(t)-r_i)\\
M\ddot{Q}_i(t)+U'(Q_i(t))&=-\alpha \sigma(q(t)-r_i).
\end{aligned}
\right.
\end{equation}
 Let $p$ be the momentum of the particle
 and let  $P_i=M\dot{Q}_i$. Then  the equations~\eqref{eq:motionlaw2} are generated by the Hamiltonian
\[H(q,p,Q_i,P_i)=\dfrac{p^2}{2}+\sum_{i\in \Z^d}H_{\textrm{scatt}}(Q_i,P_i)+\alpha \sum_{i\in \Z^d}Q_i\sigma(q-r_i),\]
where $H_\textrm{scatt}$ is the Hamiltonian associated to the dynamics of a single scatterer. The total energy of the system is 
conserved and
\begin{equation}\label{eq:Hscatt}
H_\textrm{scatt}(Q,P)=\dfrac{P^2}{2M	}+U(Q).
\end{equation}
In other words, the internal degree of freedom of each scatterer reacts to the passage of the particle while allowing the energy of the system to be conserved. Obviously, this evolution of the internal degree of freedom does not affect its spatial localisation, nor its interaction area.

Further more, we assume that the particle is always fast which means that it always crosses the interaction region in a time of order $
\|p\|^{-1}$ (see \cite{equilibration}), where $\|\cdot\|$ is for the Euclidean norm in $\R^d$. We will assume moreover that its kinetic 
energy$\|p(t)\|^2/2$ is well above the typical interaction potential $\alpha Q \sigma (q(t)-r)$ that it encounters in any scattering 
event. 

So defined, this dynamical system is too difficult to study with full mathematical rigour. Indeed, the particle can pass through a 
scatterer at least twice: recollisions between the particle and a same scatterer are possible (see Figure~\ref{fig:traj_part}), making it 
necessary to keep track of the evolution of the internal degree of freedom.  In addition, the geometry of the scattering centers $r_i$ 
induces additional difficulties. This means we are dealing with a very complex nonlinear infinite dimensional dynamical system. To 
simplify the problem, we will  follow~\cite{equilibration} and instead describe the trajectory of the particle by a Markov chain, 
eliminating both the complexity induced by recollisions and by the geometry of the family $(r_i)_{i\in \Z^d}$ provided the horizon is 
finite. In this manner, we concentrate on the essential dynamical phenomena induced by the individual scattering events.

Each step of this Markov chain,  described in detail in Section~\ref{Sec:MarkovModel}, corresponds to a collision between the particle 
and a scatterer. We view the states of the scatterer that the particle successively visits as independent and identically distributed 
random variables and fix the distance between two consecutive scatterers met by the particle. In \cite{equilibration}, it was shown, 
for a Markov chain which is a cut version of the one we consider in Section~\ref{Sec:MarkovModel}, that it 
captures the essential features of the behaviour of the original system very well, both in terms of the evolution of the momentum 
distribution of the particle and of its diffusive spatial displacement.

In this paper, our contribution is the obtaining of rigorous results on the asymptotic behaviour of the Markov chain model, in a 
suitable regime for $\alpha$ and $M$, described below, and that corresponds to a weak-coupling limit. We will show that the 
momentum distribution of the particle then converges to a Maxwellian. 

The rest of the paper is organized as follows. In Sec.~\ref{Sec:MarkovModel} we introduce the Markov chain description of the 
particle's momentum (Proposition~\ref{prop:dlmc}) and we identify precisely the parameters and the time scale for which we obtain, 
in Theorem~\ref{thm:averaging}, the weak coupling limit of the Markov chain. Once this limit obtained, we  use in Sec.
\ref{Sec:Averaging},  the Fokker-Plank equation to compute the stationary distribution and we will show that it approaches the 
Maxwellian. The proof of Proposition~\ref{prop:dlmc} and Theorem~\ref{thm:averaging} are given in Sec.\ref{Sec:Appendix}.

\noindent \textbf{Acknowledgement :} This work is in part supported by IRCICA, USR CNRS 3380 and the  Labex CEMPI (ANR-11-
LABX-0007-01). The author thanks  S. De Bièvre and P.E. Parris  for their helpful discussions.

\section{The Markov chain description and weak coupling limit}\label{Sec:MarkovModel}

 To each trajectory $(q(t),p(t))$, solution of the deterministic and Hamiltonian equations of motion~\eqref{eq:motionlaw2}, one can 
 associate a sequence $(t_n,p_n,b_n,r_{i_n},Q_{n},P_{n})_{n\in \N}$. Here, $t_n$ is the instant the particle arrives on the $n$-th 
 scattering region that it will encounter, $p_n=\dot{q}(t_n)$ is its incoming velocity;  $r_{i_n}$ is  the $n$-th scattering center visited 
 by the particle, $Q_{n}$ and $P_{n}$ are the initial states of the scatterer. Last,  the impact parameter $b_n$ models the approach of 
 this scatterer (Figure~\ref{fig:obstacle}) and is, by its definition, orthogonal to the incoming velocity $\|p_n\|$.

\begin{figure}
\centering
\includegraphics[scale=0.5]{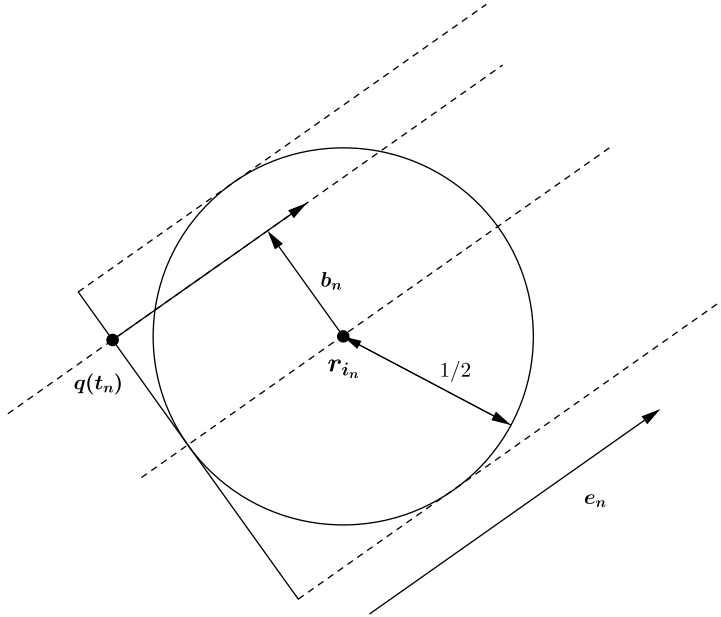}
\caption{Approach of the $n$-th scatterer: the particle arrives at the instant $t_n$ on the $n$-th scatterer it encounter which is 
centered in $r_{i_n}$. At this instant, the particle has position $q(t_n)$ and velocity $p_n=\dot{q}(t_n)$.\label{fig:obstacle}}
\end{figure}

The associated Markov chain is now constructed by first introducing randomness in the parameters $b_n,\, Q_n,\, P_n$, as follows. 
We assume that the sequence $(Q_n,P_n)_n$ is i.i.d with respect to a stationary distribution of the $H_{\textrm{scatt}}$
 \eqref{eq:Hscatt}: 
\begin{equation}\label{eq:stationary_state}
\rho(Q_n,P_n)=\hat{\rho}\left(H_{\textrm{scatt}}(Q_n,P_n)\right)=\rho(Q_n,-P_n),
\end{equation}
where $\rho$ is assumed to be a probability density with compact support.  Similarly, we assume that $(b_n)_n$ is a sequence of i.i.d 
random variables conditioned to be such that, for all $n\in \N$,  the scalar product $b_n\cdot p_n=0$. We denote, as above, by
$p_n$ the momentum of the particle before it encounters, at time $t_n$ and with an impact parameter $b_n$, the $n$-th scatterer. The 
latter is in the initial state $(Q_n,P_n)$. Then,  after scattering, the particle leaves this scatterer with velocity $p_{n+1}$ that it keeps 
until it encounters with a random impact parameter $b_{n+1}$ a $n+1$-th scatterer to which we attribute randomly an initial state $
(Q_{n+1},\, P_{n+1})$.  Explicitly, the momentum change is determined by
\begin{equation}\label{eq:p+R}
p_{n+1}=p_n+R(p_n,b_n,Q_n,P_n), \qquad b_n\cdot p_n=0,
\end{equation}
where
\begin{equation}\label{eq:momentum_transfer}
R(p,b,Q,P)=-\alpha\int_0^{t_+}\rd t\, Q(t)\nabla \sigma (q(t))
\end{equation}
with $t_+$ the instant the particle exits the scatterer and 
with $(q(t),Q(t))$ the unique solution of
\begin{equation}\label{eq:dynamic_unique}
\left\lbrace 
\begin{aligned}
\ddot{q}(t)&=-\alpha Q(t)\nabla \sigma(q(t))\\
M\ddot{Q}(t)+U'(Q(t))&=-\alpha \sigma(q(t)),
\end{aligned}
\right.
\end{equation}
with initial conditions
\[p(0)=p,\quad q(0)=b-\dfrac{p}{2\|p\|}, \quad Q(0)=Q \text{ and } \quad p\cdot b=0.\]
Denoting by $\Delta p_n= p_{n+1}-p_n$ the transfer of momentum, one checks readily (see below also) that this transfer depends 
only on $b_n$, $(Q_n, P_n)$ and $\|p_n\|$.  Finally,  we eliminate the geometry inherent to the initial problem and encoded in the 
positions of the scattering centres $r_i$ by fixing the distance $\ell_*$ the particle travels between two consecutive scatterers.
 With an initial data $(q_0,p_0)$, the position $q_n$ and the momentum $p_n$ of the particle at time $t_n$ are therefore iteratively 
 defined through the relations
\begin{equation}\label{eq:MC1}
\left\lbrace 
\begin{aligned}
p_{n+1}&=p_n+R(p_n,\kappa_n)\\
t_{n+1}&=t_n+\dfrac{\ell_*}{\|p_{n+1}\|}\\
q_{n+1}&= q_n+\dfrac{\ell_*}{\|p_{n+1}\|}p_{n+1},
\end{aligned}
\right.
\end{equation}
where we denote by $\kappa_n$ the triple $(b_n,Q_n,P_n)$.  Note that this is indeed a Markov chain which is  completely determined 
by the initial data $(q_0,p_0)$ and by the sequence of independent and identically distributed random variables $(\kappa_n)_n$.
This provides a simplification of the full deterministic model~\eqref{eq:motionlaw2}.
 
 In the following, for all functions depending on $p$ and other variables such as $\alpha$ and $M$,  we denote its expectation with 
 respect to the random variable $\kappa=(Q,P,b)$ by
\begin{equation}\label{eq:def_expectation}
\overline{f(p,\alpha,M)}=\int\limits_{\substack{\|b\| \leq 1/2 \\ b\cdot p=0}}\dfrac{\rd b}{C_d}\int_{\R\times \R}\rd Q\, \rd P\, \rho(Q,\, P) 
f(p,\, b,\, Q,\, P),
\end{equation}
with $C_d$ the volume of the sphere of radius $1/2$ in $\R^{d-1}$. Moreover, we denote by
\begin{equation}\label{eq:Eetoile}
E_*=\overline{H_{scatt}}=\int \rd Q\, \rd P\,  H_{\mathrm{scatt}}(Q,P)\hat{\rho}\left(H_{\mathrm{scatt}}(Q,P)\right).
\end{equation}
As the distribution $\rho$ is with compact support, the average energy $E_*$ in \eqref{eq:Eetoile} is finite.

In \eqref{eq:MC1}, the first equation, which describes the momentum of the particle after each collision with a scatterer, is 
independent of the two others, thus, we can treat it separately. The transfer of kinetic energy which takes place during a single 
collision for a particle of momentum $p\in \R^d$ is
\begin{equation}\label{eq:energy_transfer1}
\Delta E(\|p\|,\kappa)=\frac{1}{2}\left(\|p+R(p,\kappa)\|^2-\|p\|^2\right).
\end{equation}
Using~\eqref{eq:momentum_transfer}, this energy transfer becomes
\begin{equation}\label{eq:energy_transfer}
\Delta E(\|p\|,\kappa) =\alpha\int_0^{t_+} \rd s\, \dot{Q}(s)\sigma(q(s)).
\end{equation}
 It is the Markov chain for the energy transfer that we shall study here:
\begin{equation}\label{eq:mctrivial}
E_{n+1}=E_n +\Delta E(\sqrt{2E_n}, \kappa_n),
\end{equation}
with $\kappa_n=(Q_n, P_n, b_n)$ as before.

We change the parameters $\left(\alpha,M\right)$ into $\left(\alpha_*,M\right)$ where  \[\alpha_*=\alpha/\sqrt{M}\]
and keep track of the dependence on these parameters in the energy transfer. 
 
The main result of this paper is Theorem~\ref{thm:averaging}. It is a weak coupling limit on the Markov chain~\eqref{eq:mctrivial}. 
The statement is that after an appropriate rescaling of the time variable, the limit of the Markov chain describing the kinetic energy  as $\alpha_*\to 0, \, M\to +\infty$ and $\alpha\to 0$ is a diffusion process which is solution of a well posed martingale problem. But, before giving the full statement of this theorem, 
we need to expand the term of energy transfer of \eqref{eq:mctrivial} in a high energy regime. This is the aim of Proposition~
\ref{prop:dlmc}.

To analyse the weak coupling of the Markov chain~\eqref{eq:mctrivial}, we need the high energy behaviour  of the energy transfer.
 It was shown in~\cite{equilibration} that one can expand $\Delta E$ in inverse powers of $\|p\|$ for large $E$: for all $K\in \N_*$ and 
 for all $(p,\kappa )\in \R^d\times \R \times \R \times \R^d$,
\begin{equation}\label{eq:dl_energy}
\Delta E(\|p\|,\alpha_*,M,\kappa)=\sum_{\ell=0}^K\dfrac{\beta^{(\ell)}(\alpha_*,M,\kappa)}{\|p\|^\ell}+ O\left( \|p\|^{-(K+1)}\right).
\end{equation}
As the time spent by the particle in a scatterer is of order $\|p\|^{-1}$, it appears that, for all $\alpha>0$ and for all $\kappa$,
\[\beta^{(0)}(\alpha_*,M,\kappa)=0.\]
The leading coefficients of this expansion were computed in~\cite{equilibration}. In the following proposition, we provide detailed 
control on the error terms, in particular their behaviour in $M$ and in $\alpha_*$.

Thus, we have the following result on the energy transfer \eqref{eq:energy_transfer}.
\begin{proposition}\label{prop:dlmc}
For all $\alpha_*>0$, $M\gg 1$,  $\kappa=\left(P,Q,b\right)$ and $p\in \R^d$ such that $p\cdot b=0$, we have
\begin{multline}\label{eq:dlmc}
\Delta E(\|p\|,\alpha_*,M,\kappa)=\alpha_*\left(\dfrac{\beta^{(1)}(\kappa)}{\|p\|}+O_0\left(M^{-1/2}\|p\|^{-2}\right)+O\left(M^{-3/2}\|p\|
^{-4}\right)\right)\\
+\alpha_*^2\left(\dfrac{\delta \beta^{(2)}(\kappa)}{\|p\|^2}+O_0\left(M^{1/2}\|p\|^{-3}\right)+\frac{\delta \beta^{(4)}(\kappa)}{\|p\|^4}
+O\left(M^{-1/2}\|p\|^{-5}\right)\right)+o(\alpha_*^2\sqrt{M}),
\end{multline}
where 
\begin{equation}\label{eq:beta1}
\beta^{(1)}(\kappa)=\frac{P}{\sqrt{M}}L_0(\|b\|) \text{ and } \overline{\beta^{(1)}}=0,
\end{equation}
\begin{equation}\label{eq:beta2}
\delta\beta^{(2)}(\kappa)=-\frac{L_0(\|b\|)^2}{2} \text{ and } \overline{\delta \beta^{(2)}}=-\dfrac{\overline{L_0^2}}{2},
\end{equation}
\begin{equation}\label{eq:beta4}
\overline{\delta\beta^{(4)}}=\Sigma_1^2\left(\frac{d-3}{2}+\frac{M\overline{U''}}{\overline{P^2}}\dfrac{\overline{\int_0^1\rd \lambda 
\int_0^\lambda \rd \lambda' K_0(\lambda',\|b\|)\sigma(b+\lambda' e)}}{\overline{L_0^2}}\right),
\end{equation}
with $\Sigma_1^2=\overline{(\beta^{(1)})^2}=\dfrac{\overline{P^2}}{M}\overline{L_0^2}$, and 
\[L_0(\mu,\|b\|)=\int_0^\mu\rd \lambda \, \sigma(b+(\lambda-\frac12)e) \quad \text{and}\quad K_0(\|b\|)=\int_0^1 \rd \lambda \, 
L_0(\lambda,\|b\|).\]

We denote by $O_0(\|p\|^{-k})$ a term of order $O(\|p\|^{-k})$ of zero average with respect to the 
 random variable $\kappa$, see \eqref{eq:def_expectation}.
\end{proposition}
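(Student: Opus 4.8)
The plan is to exploit the two-time-scale structure of~\eqref{eq:dynamic_unique}: the particle crosses the support of $\sigma$ in a time of order $\|p\|^{-1}$, while the internal oscillator, obeying $M\ddot Q+U'(Q)=-\alpha\sigma(q)$, evolves on the much slower scale $\sqrt M$. First I would rescale time by $\tau=\|p\|t$, set $x(\tau)=q(\tau/\|p\|)$, $\tilde Q(\tau)=Q(\tau/\|p\|)$, write $e=p/\|p\|$ and use $\alpha=\alpha_*\sqrt M$; the equations of motion become
\[x''=-\frac{\alpha_*\sqrt M}{\|p\|^2}\,\tilde Q\,\nabla\sigma(x),\qquad\tilde Q''=-\frac{1}{M\|p\|^2}U'(\tilde Q)-\frac{\alpha_*}{\sqrt M\,\|p\|^2}\sigma(x),\]
with $x(0)=b-\tfrac12e$, $x'(0)=e$, $\tilde Q(0)=Q$, $\tilde Q'(0)=P/(M\|p\|)$, while~\eqref{eq:energy_transfer} turns into the exact identity $\Delta E=\alpha_*\sqrt M\int_0^{\tau_+}\tilde Q'(\tau)\,\sigma(x(\tau))\,\rd\tau$. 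Integrating the $\tilde Q$-equation once gives
\[\tilde Q'(\tau)=\frac{P}{M\|p\|}-\frac{1}{M\|p\|^2}\int_0^\tau U'(\tilde Q(s))\,\rd s-\frac{\alpha_*}{\sqrt M\,\|p\|^2}\int_0^\tau\sigma(x(s))\,\rd s,\]
which I would insert into $\Delta E$.

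The leading flight is the straight line $x^{(0)}(\tau)=b+(\tau-\tfrac12)e$, and since $\sigma$ and all its derivatives vanish off $B(0,1/2)$, $\sigma(x^{(0)}(\tau))$ is supported in a subinterval of $[0,1]$. The next step is a Gronwall / contraction estimate for the coupled system on $[0,1]$, valid when $M\gg1$ and $\alpha_*\sqrt M/\|p\|^2$ is small — exactly the standing hypothesis that the kinetic energy dominates the interaction potential $\alpha Q\sigma$ — giving quantitative bounds such as $\|x(\tau)-x^{(0)}(\tau)\|\le C\alpha_*\sqrt M\,|Q|\,\|p\|^{-2}$ and $|\tilde Q(\tau)-Q|\le C(|P|(M\|p\|)^{-1}+\|p\|^{-2})$, uniform over the support of $\rho$ (which itself broadens in $P$ as $M$ grows, so that the powers of $M$ in the error terms are the correct uniform ones), together with the Duhamel expansions $x=x^{(0)}+x^{(1)}+x^{(2)}+\cdots$ and $\tilde Q=Q+\tilde Q^{(1)}+\cdots$ — e.g. $x^{(1)}(\tau)=-\tfrac{\alpha_*\sqrt M\,Q}{\|p\|^2}\int_0^\tau(\tau-s)\nabla\sigma(x^{(0)}(s))\,\rd s$ and $\tilde Q^{(1)}(\tau)=\tfrac{P\tau}{M\|p\|}-\tfrac{U'(Q)\tau^2}{2M\|p\|^2}-\tfrac{\alpha_*}{\sqrt M\,\|p\|^2}\int_0^\tau(\tau-s)\sigma(x^{(0)}(s))\,\rd s$ — with explicit control on the powers of $M$ and $\|p\|$. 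Because the true exit time $\tau_+$ differs from $\tfrac12+\sqrt{1/4-\|b\|^2}\le1$ only by a small correction and every integrand above carries an explicit factor $\sigma(x(\tau))$ that vanishes for $\tau\ge\tau_+$, one may replace $\tau_+$ by the fixed limit $1$ throughout; this produces the $\int_0^1$ normalisation in $L_0(\|b\|)$ and $K_0(\|b\|)$.

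Finally I would substitute the expansions into $\Delta E=\alpha_*\sqrt M\int_0^1\tilde Q'(\tau)\sigma(x(\tau))\,\rd\tau$, Taylor-expand $\sigma(x(\tau))$ about $x^{(0)}(\tau)$ and $U'(\tilde Q(s))$ about $Q$, and collect by powers of $\alpha_*$, $M^{\pm1/2}$ and $\|p\|^{-1}$. The $\tfrac{P}{M\|p\|}\int\sigma(x^{(0)})$ contribution gives $\alpha_*\beta^{(1)}(\kappa)/\|p\|$ with $\beta^{(1)}=\tfrac{P}{\sqrt M}L_0(\|b\|)$; the $U'(Q)$-piece of $\tilde Q'$ paired with $\sigma(x^{(0)})$ gives $-\tfrac12\alpha_*U'(Q)L_0(\|b\|)M^{-1/2}\|p\|^{-2}$, the $O_0(M^{-1/2}\|p\|^{-2})$ term, of zero $\kappa$-average since $\overline{U'(Q)}=0$ by the symmetry of $\rho$; the $\sigma(x^{(0)})$-piece of $\tilde Q'$ paired with $\sigma(x^{(0)})$ evaluates, via $\tfrac{\rd}{\rd\tau}L_0(\tau,\|b\|)=\sigma(x^{(0)}(\tau))$, to $-\tfrac{\alpha_*^2}{\|p\|^2}\cdot\tfrac12L_0(\|b\|)^2$, i.e. $\alpha_*^2\delta\beta^{(2)}(\kappa)/\|p\|^2$ with $\delta\beta^{(2)}=-\tfrac12L_0(\|b\|)^2$; and a further order — the $\nabla\sigma\cdot x^{(1)}$, $\nabla\sigma\cdot x^{(2)}$ and $D^2\sigma[x^{(1)},x^{(1)}]$ corrections of $\sigma(x(\tau))$ (whose $b$-average, using the rotational invariance of $\sigma$ in $\R^d$, produces the dimensional factor $\tfrac{d-3}{2}$) together with the $U''(Q)$-correction coming from $U'(\tilde Q)$ — produces the $\|p\|^{-3}$ and $\|p\|^{-4}$ coefficients, which after repeated integration by parts and Fubini become the iterated chord integrals in~\eqref{eq:beta4}. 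Averaging over $\kappa$ via~\eqref{eq:def_expectation} and using $\rho(Q,P)=\rho(Q,-P)$ (so $\overline P=0$, hence $\overline{\beta^{(1)}}=0$, $\overline{PQ}=0$, $\overline{PU''(Q)}=0$) yields~\eqref{eq:beta1}--\eqref{eq:beta4}, identifies the zero-mean remainders ($O_0$), and collects the rest into $o(\alpha_*^2\sqrt M)$.

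The main difficulty is the bookkeeping forced by $\alpha=\alpha_*\sqrt M$: the orbital deflection carries the growing factor $\sqrt M$, so the two small parameters — $\alpha_*\sqrt M/\|p\|^2$ for the orbit and $(M\|p\|)^{-1}$ for the oscillator drift — are coupled, and one must prove the Gronwall bounds uniformly in $M$ (and over the $M$-dependent support of $\rho$), push the Duhamel iteration far enough that every contribution of size at least $\alpha_*^2\sqrt M$ is either extracted explicitly or shown to be $O_0$, and compute the $b$-averages entering~\eqref{eq:beta4} — in particular disentangling the second-order orbital correction (source of the $d$-dependence) from the oscillator's back-reaction (source of the $U''$ term).
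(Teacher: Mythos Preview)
Your outline is correct and is, at bottom, the same perturbation expansion the paper carries out, but organised differently. The paper first Taylor-expands $q(s)$ about the free flight $q+ps$, which produces the splitting $\Delta E=\Delta E_a+\Delta E_b+\Delta E_c$, and then inside each piece Taylor-expands $\dot Q(s)$ in powers of $s$ by repeatedly differentiating the ODE, separating every derivative $Q^{(k+1)}(0)$ into its $\alpha_*{=}0$ part and its $\delta$-part. You instead rescale time by $\tau=\|p\|t$, integrate the oscillator equation once to obtain an exact integral formula for $\tilde Q'$, and run a Duhamel/Picard iteration on the pair $(x,\tilde Q)$. The dictionaries match term for term: your $\sigma(x^{(0)})$, $\nabla\sigma\cdot x^{(1)}$, $\nabla\sigma\cdot x^{(2)}+\tfrac12 D^2\sigma[x^{(1)},x^{(1)}]$ are the paper's $\Delta E_a,\Delta E_b,\Delta E_c$, and the three summands in your formula for $\tilde Q'$ reproduce the paper's decomposition of $\dot Q(s)$ by order in $\alpha_*$. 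Your rescaling has the merit of displaying the two genuine small parameters $\alpha_*\sqrt M\,\|p\|^{-2}$ and $(M\|p\|)^{-1}$ from the outset, which makes the $M$-bookkeeping you correctly flag as the crux more transparent; the paper obtains the same control through the estimates~\eqref{eq:Q_ell_alpha0} and~\eqref{eq:deltaQk} on $Q^{(k+1)}(0)$. One small point: you attribute $\overline{U'(Q)}=0$ to ``the symmetry of $\rho$'', but the paper's argument (and the one that works without assuming $U$ even) is that $\rho$ is \emph{stationary} for the free oscillator flow, so $\overline{\dot P}=-\overline{U'(Q)}=0$. Finally, both your sketch and the paper stop short of the explicit evaluation of $\overline{\delta\beta^{(4)}}$ --- the $(d-3)/2$ and $U''$ pieces --- and defer that computation to~\cite{equilibration}.
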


The proof of Proposition~\ref{prop:dlmc} is given in Section~\ref{Sec:Appendix}.

Note that, the coefficients $\beta^{(1)}(\kappa)$, $\delta \beta^{(2)}(\kappa)$ and $\delta \beta^{(4)}(\kappa)$ only depend on $
\kappa$ and not on $(\alpha_*,M)$.
By Proposition~\ref{prop:dlmc}, we can write the transfer of the particle's kinetic energy as 
\begin{multline}\label{eq:cmpn}
\frac{\|p_{n+1}\|^2}{2}=\frac{\|p_n\|^2}{2}+\alpha_*\left(\frac{\beta^{(1)}(\kappa_n)}{\|p_n\|}+O_0(M^{-1/2}\|p_n\|^{-2})+O(M^{-3/2}\|
p_n\|^{-4})\right)\\
+\alpha_*^2\left(\frac{\delta \beta^{(2)}(\kappa_n)}{ \|p_n\|^2}+\frac{\delta \beta^{(4)}(\kappa_n)}{\|p_n\|^4}+O_0(M^{1/3}\|p_n\|^{-3})
+O(M^{-1/2}\|p_n\|^{-5})\right)+o(\alpha_*^2\sqrt{M}).
\end{multline}
We observe that the dominating term of order $\alpha_*^2$ is negative. As the kinetic energy of the particle is a positive quantity, we 
have to avoid that this term leads to a negative energy. Moreover, even if the energy is positive it is not sufficient to describe its 
dynamics by a Markov chain. When the particle is in a low energy state, we do not have any information on its dynamics. It can spent 
a very long time in scatterer, or be trapped in  indefinitely. In this specific case, a description by a Markov chain can not be done. 
The mechanisms of the dynamics at low energy is a complicated subject, so we will introduce a 
critical value $\xi_*\gg 0$  and force the Markov chain to remain higher than this critical 
value using a reflection principle over $\xi_+$.

This cut off in the description of the kinetic energy is a drawback in our model but it is necessary  to have a Markov chain description. We proceed as follow: let $\xi_+\gg  0$ and let $F$ be a function such that for all $x\geq\xi_+$
\begin{multline}\label{eq:defF}
F(x,\alpha_*,M,\kappa)= x+\alpha_*\left(\frac{\beta^{(1)}(\kappa)}{(2x)^{1/2}}+O_0(M^{-1/2}x^{-1})+O(M^{-3/2}x^{-2})\right)\\
+\alpha_*^2\left(\frac{\delta \beta^{(2)}(\kappa)}{ x}+\frac{\delta \beta^{(4)}(\kappa)}{x^2}+O_0(M^{1/3}x^{-3/2})+O(M^{-1/2}
x^{-5/2})\right)+o(\alpha_*^2\sqrt{M}),
\end{multline}
and we will consider the Markov chain $\left(E_n(\alpha_*,M)\right)_n$ with initial condition 
\[\forall \alpha_*>0 \text{ and }\forall M\gg 1, \quad E_0(\alpha_*,M)=\|p(0)\|^2/2\gg \xi_+\] and such that
\begin{equation}\label{eq:finalMC}
\left\lbrace 
\begin{aligned}
E_{n+1}(\alpha_*,M)=F(E_n,\alpha_*,M,\kappa_n) \qquad &\text{if }F(E_n,\alpha_*,M,\kappa_n)\geq \xi_+,\\
E_{n+1}(\alpha_*,M)=2\xi_+-F(E_n,\alpha_*,M,\kappa_n)\qquad &\text{if }F(E_n,\alpha_*,M,\kappa)< \xi_+,
\end{aligned}
\right.
\end{equation}
to shorten  we write $E_n$ instead of $E_n(\alpha_*,M)$ in the function $F$ for a sake of notations.
Hence described, the Markov chain $\left(E_n(\alpha_*,M)\right)$ is a description of the kinetic energy of the particle.

We are now in position to state the weak coupling result we have for the Markov chain $\left(E_n(\alpha_*,M)\right)_n$ defined by \eqref{eq:finalMC}. 
We first construct a family of continuous stochastic processes depending on $(\alpha_*,M)$ and built from the Markov chain $
(E_n(\alpha_*,M))_n$.  For all $n\in \N$, let $\tau_n=\alpha_*^2n$ be the time scale parameter. Then, 
for all $n\in \N$ and for all $\tau \in [\tau_n, \tau_{n+1})$, we define
\[E(\alpha_*,M,\tau)=\dfrac{\tau_{n+1}-\tau}{\alpha_*^2}E_n(\alpha_*,M)+\dfrac{\tau-\tau_n}{\alpha_*^2}E_{n+1}(\alpha_*,M).\]
Thus, for all $\alpha_*>0,\, M\gg 1	$, the process $(E(\alpha_*,M,\tau))_\tau$ is continuous. Note that $\tau$ corresponds to a 
number of collision and not to a time. The following result holds for the family $\left(E(\alpha_*,M,\tau),\tau\in [0,1]
\right)_{(\alpha_*,M)}$.
\begin{theorem}\label{thm:averaging}The family of stochastic processes $\left(E(\alpha_*,M,\tau),\tau\in [0,1]\right)_{(\alpha_*,M)}
$ converges weakly, for $\alpha_*\to 0$, $\alpha\to 0$ and $M\to +\infty$, to the unique solution of the martingale problem 
associated to the operator $(\L,\D_*)$ with
\begin{equation}\label{eq:pbmartalpha}\mathcal{D}_*=\left\lbrace f\in \C^\infty([\xi_+,+\infty)),\, \lim_{x\to \xi_+}f'(x)=0\right\rbrace,
\end{equation}
\begin{equation}\label{eq:pbmartalpha2}\L f(x)=\frac12\frac{\Sigma_1^2}{2x}f''(x)-\left(\dfrac{\overline{\delta\beta^{(2)}_+}}{2x}+
\dfrac{\overline{\delta\beta^{(4)}}}{4x^2}\right)f'(x).
\end{equation}
\end{theorem}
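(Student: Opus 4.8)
The plan is to use the Stroock--Varadhan martingale-problem method for diffusion approximation of Markov chains, in three steps: (1) establish tightness of the family $\bigl(E(\alpha_*,M,\cdot)\bigr)_{(\alpha_*,M)}$ in $C\bigl([0,1],[\xi_+,+\infty)\bigr)$; (2) show that every weak limit point is, with initial value $\|p(0)\|^2/2$, a solution of the martingale problem for $(\L,\D_*)$; (3) show this martingale problem is well posed. Together these give weak convergence of the whole family to its unique solution.

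The core of step (2) is the convergence of generators. For $f\in\D_*$ and $x\ge\xi_+$ put $\A_{\alpha_*,M}f(x)=\alpha_*^{-2}\,\overline{f(E_1)-f(x)}$, where $E_1$ is one step of \eqref{eq:finalMC} started at $x$; this is, up to a negligible error, the generator of the time-changed chain, and the goal is $\A_{\alpha_*,M}f\to\L f$ uniformly on every $[\xi_+,R]$. On $\{F(x,\alpha_*,M,\kappa)\ge\xi_+\}$ one has $E_1-x=\Delta E(\sqrt{2x},\alpha_*,M,\kappa)$ and Taylor-expands $f(E_1)-f(x)=f'(x)\,\Delta E+\tfrac12 f''(x)(\Delta E)^2+\tfrac16 f'''(x)(\Delta E)^3+\cdots$. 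Inserting the expansion of $\Delta E$ from Proposition~\ref{prop:dlmc} with $\|p\|^2=2x$: the order-$\alpha_*$ contribution to the conditional mean vanishes since $\overline{\beta^{(1)}}=0$ by \eqref{eq:beta1}; the first-order Taylor term produces at order $\alpha_*^2$ the drift part of $\L$, built from $\overline{\delta\beta^{(2)}}$ and $\overline{\delta\beta^{(4)}}$, while the second-order Taylor term produces the diffusion part, with coefficient governed by $\tfrac12\,\overline{(\beta^{(1)})^2}/(2x)=\tfrac12\,\Sigma_1^2/(2x)$; cubic and higher Taylor terms contribute to the conditional mean only at order $\alpha_*^3$, since $\Delta E=O(\alpha_*)$ uniformly ($\rho$ has compact support, so $\beta^{(1)},\delta\beta^{(2)},\dots$ are bounded). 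What remains is to check that every $O$- and $O_0$-remainder of Proposition~\ref{prop:dlmc}, and every square and cross-product of such a remainder with the leading $\beta^{(1)}/\|p\|$ term inside $(\Delta E)^2$, is --- after division by $\alpha_*^2$ and on $[\xi_+,R]$ --- either a negative power of $M$, or $\alpha_*$ times a power of $M$, or $o(\sqrt M)$, and so tends to $0$ in the joint limit $\alpha_*\to0$, $\alpha=\alpha_*\sqrt M\to0$, $M\to+\infty$; in particular the order-$\alpha_*^2$ zero-mean remainder $O_0(M^{1/2}\|p\|^{-3})$ and the $o(\alpha_*^2\sqrt M)$ residual are controlled precisely by $\alpha=\alpha_*\sqrt M\to0$, and $\Sigma_1^2$ stays of order $1$ because at fixed average energy $E_*$ the scatterer momenta satisfy $\overline{P^2}$ of order $ME_*$. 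The reflection is absorbed by the defining property $\lim_{x\to\xi_+}f'(x)=0$ of $\D_*$: on $\{F<\xi_+\}$ one replaces $f(F)$ by $f(2\xi_+-F)$, and since $F-x=O(\alpha_*)$ uniformly one has $|\xi_+-F|=O(\alpha_*)$ there, so $f(2\xi_+-F)-f(F)=O\bigl((\xi_+-F)^3\bigr)=O(\alpha_*^3)$; summed over the $O(\alpha_*^{-2})$ steps of $[0,1]$ this is an $O(\alpha_*)$ correction and disappears in the limit.

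For tightness (step 1), the starting point is uniform moment control: from \eqref{eq:finalMC} and Proposition~\ref{prop:dlmc} the conditional mean of $E_{n+1}-E_n$ is $O(\alpha_*^2)$ (the $\alpha_*$-term has zero conditional mean by $\overline{\beta^{(1)}}=0$ and the zero-average property of the $O_0$-remainder, and the leading $\alpha_*^2$-term is $\alpha_*^2\overline{\delta\beta^{(2)}}/(2E_n)<0$), hence $\E[E_{n+1}\mid\mathcal F_n]\le E_n+C\alpha_*^2$ and, by a discrete Gronwall-type iteration, $\sup_{n:\,\tau_n\le1}\E[E_n^q]\le C_q$ for every $q\ge1$; in particular $E_n$ remains in a fixed interval $[\xi_+,R]$ with probability uniformly close to $1$. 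The fluctuating part of the increment, $\alpha_*\beta^{(1)}(\kappa_n)/\sqrt{2E_n}+\alpha_*O_0(\cdots)$, is a martingale-difference sequence bounded by $C\alpha_*$, so Burkholder's inequality gives $\E\bigl[|E_n-E_m|^4\bigr]\le C\bigl(\alpha_*^2(n-m)\bigr)^2=C\,|\tau_n-\tau_m|^2$ at grid times; with the bound on a single increment this yields a Kolmogorov--Chentsov estimate for the piecewise-linear interpolation, hence tightness in $C([0,1])$. Passing to a subsequential limit $E(\cdot)$ and using the uniform convergence $\A_{\alpha_*,M}f\to\L f$ on $[\xi_+,R]$ together with the moment bounds, one concludes that $f(E(\tau))-f(E(0))-\int_0^\tau\L f(E(s))\,ds$ is a martingale for every $f\in\D_*$, which is step (2).

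For step (3), $\L$ is a non-degenerate one-dimensional diffusion generator with smooth coefficients on $(\xi_+,+\infty)$; the Neumann condition in $\D_*$ makes $\xi_+$ an instantaneously reflecting (regular) boundary, while the drift $b(x)\sim c/x$ with $c\neq0$ and the diffusion coefficient $a(x)=\Sigma_1^2/(2x)$ are such that $2b/a$ is bounded, so Feller's test shows $+\infty$ to be a natural, inaccessible boundary; there is thus no explosion, and the unique solution is, e.g., the time-changed reflected Brownian motion built from the scale function and speed measure of $\L$. This yields uniqueness of the limit point and hence convergence of the whole family. I expect step (2) --- the simultaneous control of all the error terms of Proposition~\ref{prop:dlmc} after the $\alpha_*^{-2}$ rescaling, which is exactly what fixes the admissible relation among $\alpha_*$, $\alpha$ and $M$ (for instance $M^{-3/2}\ll\alpha_*\ll M^{-1/2}$, the upper bound being $\alpha\to0$), together with the treatment of the reflection corrections --- to be the main technical obstacle; tightness and the uniqueness of the limiting reflected diffusion are comparatively routine.
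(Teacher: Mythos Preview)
Your proposal is correct and follows essentially the same Stroock--Varadhan scheme as the paper: generator convergence via the expansion of Proposition~\ref{prop:dlmc} (including the use of $f'(\xi_+)=0$ to absorb the reflection), tightness/pre-compactness, Skorohod representation and identification of limits as solutions of the martingale problem for $(\L,\D_*)$, and well-posedness of the latter. The only differences are in implementation---the paper obtains pre-compactness from a submartingale bound together with Theorem~1.4.11 of Stroock rather than Burkholder/Kolmogorov--Chentsov, and invokes the Lipschitz property of $a,b$ on $[\xi_+,+\infty)$ for well-posedness rather than Feller's boundary classification---and you are in fact more explicit than the paper about the admissible range $M^{-3/2}\ll\alpha_*\ll M^{-1/2}$ needed to kill the $\alpha_* O(M^{-3/2})$ drift remainder after division by $\alpha_*^2$.
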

Hence, $\left( \L,\mathcal{D}_*\right)$  is a core for the infinitesimal generator of the process generated by $\L$ with 
reflection over $\xi_+$. We refer to \cite{EthierKurtz,Mandl} for more details on the cores for infinitesimal generator.
The proof of this theorem is given in Sec.~\ref{Sec:Appendix}. 


\section{Stationary distribution}\label{Sec:Averaging}

The probability distribution $\rho(x,\tau)$ of the limiting process satisfies the Fokker-Plank equation with a reflection condition in $
\xi_+$. Denoting by $J$ the probability current (or flow, see \cite{pavliotis2014stochastic}), $\rho$ is solution of 
\begin{equation}\label{eq:FP}
\left\lbrace 
\begin{aligned}
\frac{\partial}{\partial \tau}\rho(x,\tau)&=-\frac{\partial}{\partial x}J(x,\tau)\\
J(\xi_+,\tau)&=0,\qquad \forall \tau >0,
\end{aligned}
\right.
\end{equation}
where 
\begin{align}
\label{eq:J} J(x,\tau)&=-\frac{\partial}{\partial x}\left(D_1(x)\rho(x,\tau)\right)+D_2(x)\rho(x,\tau)\\
\label{eq:D1} D_1(x)&=\frac{1}{2}\frac{\Sigma_1^2}{2x}\\
\label{eq:D2} D_2(x)&=\frac{\overline{\delta\beta^{(2)}}}{2x}+\frac{\overline{\delta \beta^{4}}}{4x^2}.
\end{align}
We are interested in the distribution of the momentum $\rho_{\textrm{eq}}$ when the system has reach is equilibrium state. This 
distribution does not depend on the time any more, i.e $\partial \tau \rho_{\textrm{eq}} \equiv 0$. Hence, it remains to resolve
\begin{equation}\label{eq:FPstation}
J(x)=k
\end{equation}
where $k$ is a constant. As $J(\xi_+)=0$, the stationary distribution $\rho_{eq}$ is solution of 
\begin{equation}\label{eq:stationary}
\forall x\geq \xi_+,\qquad \frac{\partial}{\partial x}\left(D_1(x)\rho_{eq}(x)\right)+D_2(x)\rho_{eq}(x)=0.
\end{equation}
It yields that 
\begin{align}
\nonumber \rho_{eq}(x)&=\frac{\mathcal{N}_0}{D_1(x)}\exp\left(\int_{\xi_+}^x \rd y\, \frac{D_2(y)}{D_1(y)}\right)\\
  &= \mathcal{N}(\xi_+)x^{1+\frac{\overline{\delta \beta^{(4)}}}{\Sigma_1^2}}\exp\left(\dfrac{2\overline{\delta \beta^{(2)}}}{\Sigma_1^2}
  x \right)\\
  &=\mathcal{N}(\xi_+)x^{\frac{d-1}{2}}x^{\frac{C}{k_{scatt}}}\exp\left(-\frac{1}{k_{scatt}} x \right)
\end{align}
with 
\[C=\overline{U''}\frac{\overline{\int_0^1 \rd \lambda \int_0^\lambda \rd \lambda'\, K_0(\lambda')\sigma(b+\lambda'e)}}
{\overline{L_0^2}}\]and with $\mathcal{N}_0$ a normalisation constant and $\mathcal{N}(\xi_+)$ a normalisation constant depending 
on $\xi_+$ and $k_{scatt}^2=\overline{P^2}/M$.
 The change of variable from $E=\|p\|^2/2$ to $\|p\|$, gives,
\begin{align}
\nonumber \hat{\rho}_{\mathrm{eq}}(\|p\|)&=\|p\|\hat{\rho}_{\mathrm{eq}}(\|p\|^2/2)\\
\label{eq:distrilimitep} &=\widetilde{\mathcal{N}}(\xi_+)\|p\|^{d}\|p\|^{2C/k_{scatt}^2}\exp\left(-\dfrac{1}{2 k_{scatt}^2}\|p\|^2\right).
\end{align}
This distribution is associated to the number of collisions for which the final momentum of the particle is $\|p\|$ when the system is in an 
equilibrium state. In order to have a density which describes the state of the particle with respect to the time and not to the number 
of collisions, we remark that the mean time that the particle of velocity $\|p\|$ spends between two consecutive collisions  is $\ell_*/\|
p\|$. As a result, the probability density at the equilibrium $\hat{\rho}_{\mathrm{eq}}(\|p\|)$, describing the time $\rho_{\mathrm{eq}}
(\|p\|)\rd\| p\|$ that the particle spends with a momentum between $\|p\|$ and $\|p\|+ \rd \|p\|$, satisfies the relation
\[\hat{\rho}_{\mathrm{eq}}(\|p\|)=\dfrac{\ell_*}{\|p\|}\rho_{\mathrm{eq}}(\|p\|).\]
Finally, we have
\begin{equation}\label{eq:ditrilimitetemps}
\hat{\rho}_{\mathrm{eq}}(\|p\|)=\mathcal{N}(\xi_+)\|p\|^{d-1+\frac{2C}{k_{scatt}^2}}\exp\left( -\dfrac{\|p\|^2}{2k_{scatt}^2}\right).
\end{equation}
For high values of $k_{scatt}$, this stationary distribution of the particle's momentum \eqref{eq:ditrilimitetemps}, approaches the 
Maxwell-Boltzmann distribution with effective temperature $T$ such that 
\[k_BT=k_{scatt}^2,\]
with $k_B$ the Boltzmann's constant.

We end with a remark on the differences between the results presented here and the ones of \cite{equilibration}. With our
 notations, the Markov chain description of \cite{equilibration} is equivalent to 
\begin{equation}\label{eq:equi_MC}
\frac{\|p_{n+1}\|^2}{2}=\frac{\|p_n\|^2}{2}+ \alpha_*\frac{\beta^{(1)}(\kappa_n)}{\|p_n\|}+\alpha_*^2\left(\frac{\delta \beta^{(2)}
(\kappa)}{ \|p_n\|^4}+\frac{\delta \beta^{(4)}(\kappa)}{\|p_n\|^4}\right).
\end{equation}
Despite some similarities at a first sight with \eqref{eq:cmpn} and between the convergence of these two chains to the same 
Maxwellian distribution, the time scaling of the convergence for \eqref{eq:equi_MC} is not the same that the one needed here.
Indeed, the chain \eqref{eq:cmpn} contains the big $O$ terms which depend on inverse power of $M$. Considering these terms 
implies that a scaling only in $\alpha$ will give us a stationary distribution depending also on $M$, and hence, the convergence to 
the Maxwellian  can not be observed.

\section{Appendix : Proof of Proposition~\ref{prop:dlmc} and of tTheorem~\ref{thm:averaging}}\label{Sec:Appendix}
\subsection{Proof of Proposition~\ref{prop:dlmc}}
First, we expand $q(s)$ in $s$ with the initial condition $q=b-\frac12 e$. Since we always have $s\leq t_+$, which is of order $\|p\|
^{-1}$, any term of order $O(s^k)$ is automatically a term of order $O(\|p\|^{-k})$. We have, 
\[q(s)=q+ps-\alpha Q \nabla K_0(s\|p\|,\|b\|)\|p\|^{-2}-\alpha \frac{P}{M}\nabla K_1(s\|p\|,\|b\|)\|p\|^{-3}+O(s^4),\]
where, for $b\cdot e=0$, we define 
\[L_k(\mu,\|b\|)=\int_0^\mu \rd\lambda \, \sigma(b+(\lambda-\frac12) e))\lambda^k, \quad\text{and }\, K_k(\mu,\|b\|)=\int_0^\mu \rd
\lambda\,  L_k(\lambda,\|b\|).\]
Hence, we proceed as in \cite{equilibration} to write $\Delta E$, defined in \eqref{eq:energy_transfer}, as follow
\begin{multline}\label{eq:DetaE=a+b+c}
\Delta E (\|p\|, \alpha, M,\kappa)=\Delta E_a (\|p\|, \alpha, M,\kappa)+\\
 \Delta  E_b (\|p\|, \alpha, M,\kappa)+\Delta E_c (\|p\|, \alpha, M,\kappa),
\end{multline}
with
\begin{align*}
 \Delta E_a(\|p\|,\alpha,M,\kappa)&=\alpha\int_0^{t_+} \rd s\,  \dot{Q}(s)\sigma(q+ps),\\
\Delta E_b(\|p\|,\alpha,M,\kappa)&=-\dfrac{\alpha^2 Q}{\|p\|^2}\int_0^{t_+} \rd s\, \dot{Q}(s)\nabla K_0(s\|p\|,\|b\|)\cdot \nabla 
\sigma(q+ps),\\
\Delta E_c(\|p\|,\alpha,M,\kappa)&=-\dfrac{\alpha^2 P}{M\|p\|^3}\int_0^{t_+} \rd s\, \dot{Q}(s) \nabla K_1(s\|p\|,\|b\|)\cdot \nabla
\sigma(q+ps).
\end{align*}
As we replace the coupling constant $\alpha$ by $\alpha_*=\alpha/\sqrt{M}$ and as the quantity $P^2/2M$ is the kinetic energy of 
the scatterers, we write, in what follows $P/\sqrt{M}=k_{scatt}$.
Then, with these notations,
 \begin{align}
\label{eq:E_a} \Delta E_a(\|p\|,\alpha_*,M,\kappa)&=\alpha_*\sqrt{M}\int_0^{t_+} \rd s\,  \dot{Q}(s)\sigma(q+ps),\\
\label{eq:E_b}\Delta E_b(\|p\|,\alpha_*,M,\kappa)&=-\dfrac{\alpha_*^2 MQ}{\|p\|^2}\int_0^{t_+} \rd s\, \dot{Q}(s)\nabla K_0(s\|p\|,\|b\|)
\cdot \nabla \sigma(q+ps),\\
\label{eq:E_c}\Delta E_c(\|p\|,\alpha_*,M,\kappa)&=-\dfrac{\alpha_*^2 {k_{scatt}}}{\sqrt{M}\|p\|^3}\int_0^{t_+} \rd s\, \dot{Q}(s) \nabla 
K_1(s\|p\|,\|b\|)\cdot \nabla\sigma(q+ps).
\end{align}

We need to expand in $s$ each of these terms in order to determine the coefficients of order $\|p\|^{-\ell}$ in~\eqref{eq:dl_energy}, 
$\ell \geq 1$. We have already noted that $\beta^{(0)}\equiv 0$ because $t_+$ is of order $\|p\|^{-1}$. Moreover, since $0\leq s\leq 
t_+$ and $t_+$ is of order $\|p\|^{-1}$, to obtain the contributions $\beta_a^{(\ell)},\,\beta_b^{(\ell)},\, \beta_c^{(\ell)}$ from $\Delta 
E_a,\, \Delta E_b$ and $\Delta E_c$ requires that we expand the integrand of the latter.

We first compute $\Delta E_a$. 

We expand $\dot{Q}(s)\sigma(q+ps)$ in $s$ and we identify the terms which do not depend on $\alpha_*$ and those which are of 
order $\alpha_*$. We have, for any $N\geq 1$,
\begin{equation}\label{eq:sum}
\dot{Q}(s)\sigma(q+ps)=\sum_{n=0}^N\sum_{k=0}^n C_n^k\, Q^{(k+1)}(0)s^k\dfrac{\rd^{n-k}}{\rd s^{n-k}}\,\restriction{\sigma(q+ps)}
{s=0} +O(s^{N+1}),
\end{equation}
where, for all $n\leq N$, for all $k\leq n$, $C_n^k$ is the binomial coefficient. To identify the terms of order $\alpha_*$ in $\Delta E_a
$, it remains to identify those that not depend on $\alpha_*$ in~\eqref{eq:sum}. 

Here and in what follows, whenever a function $f$ depends on $\alpha_*$, and possibly on other variables such as $\|p\|$, $M$ and 
$\kappa$, we shall write $f(\alpha_*=0)$ for the values of the function $f$ on the hyperplane $\{\alpha_*=0\}$ and $\delta f:= (f-
f(\alpha_*=0)-o(\alpha_*))/\alpha_*$ for the part of order $\alpha_*$ of the function $f$.

Then, in order to identify the terms which do not depend on $\alpha_*$ in~\eqref{eq:sum}, we have to compute $Q^{(\ell)}(0,
\alpha_*=0)$, $\ell\geq 1$.
One observes that $\Delta E_b$ and $\Delta E_c$ do not contribute to the coefficients $\beta^{(1)}$ and $\beta^{(2)}$, consequently 
$\beta^{(1)}\equiv\beta^{(1)}_a$ and $\beta^{(2)}\equiv\beta^{(2)}_a$. Taking the terms of order $s^0$ in~\eqref{eq:sum}, we easily 
obtain, as $\dot{Q}(0)=k_{scatt}/\sqrt{M}$
\[\beta^{(1)}(\kappa, \alpha_*=0)=k_{scatt}L_0(\|b\|),\]
with $L_0(\|b\|)=\int_0^1\rd\, \lambda \sigma (b+(\lambda-\frac12)e)$. 
As $\beta^{(1)}(\kappa,\alpha_*=0)$ is linear in $P$ and hence of zero average in any stationary distribution $\rho$, $
\overline{\beta^{(1)}(\alpha_*=0)}=0$.
Moreover, $\overline{\beta^{(2)}(\alpha_*=0)}=0$, indeed, this term is obtained by the term of order $s^1$ in~\eqref{eq:sum}, we see 
that they behave as $\ddot{Q}(0,\alpha_*=0)$ which is such that $\overline{\ddot{Q}(0,\alpha_*=0)}=0$, since $\rho$ is stationary 
for the free dynamics of the scatterer generated by $H_\textrm{scatt}$.

Also, we note that $Q^{(3)}(0,\alpha_*=0)$ is linear in $P$, which imply that $\overline{\beta^{(3)}_a(\alpha_*=0)}=0$. 
Finally, for all $k \geq 1$, we easily obtain 
\begin{equation}\label{eq:Q_ell_alpha0}
\sup_{\kappa}\left |Q^{(k+1)}(0,\alpha_*=0)\right | \leq \Phi_k(k_{scatt}) M^{-(k+1)/2},
\end{equation}
with $\Phi_k(k_{scatt})$ a polynomial function of $k_{scatt}$ of degree inferior to $k$.
Let 
\[\Delta E_{a1}=\alpha_* \sqrt{M} \int_0^{t_+}\rd s \, \dot{Q}(s,\alpha_*=0) \sigma(q+ps),\]
be the part of order $\alpha_*$ of $\Delta E_a$. Then, by~\eqref{eq:Q_ell_alpha0}, it yields that
\begin{equation}\label{eq:DeltaEa1}
\Delta E_{a1}(\|p\|,\alpha_*,M,\kappa)=\alpha_* \left(\dfrac{\beta^{(1)}(\kappa,\alpha_*=0)}{\|p\|}+O_0(M^{-1/2}\|p\|^{-2})+O(M^{-3/2}\|
p\|^{-4})\right), 
\end{equation}
Ones observe that as the coefficients $\beta^{(2)}(\kappa,\alpha_*=0)$ and $\beta^{(3)}(\kappa,\alpha_*=0)$ are of zero average, 
they are contained in the term $O_0(M^{-1/2}\|p\|^{-2})$.

Now, we identify the terms of order $\alpha_*$ in~\eqref{eq:sum}, it remains to compute $\delta Q^{(k+1)}(0)$ for $k\geq 0$.  First, 
only $\dot{Q}(0)$ contributes to the terms of order $s^0$ in \eqref{eq:sum} and hence to the coefficient $\beta^{(1)}(\kappa)$. As $
\dot{Q}(0)$ does not depend on $\alpha_*$, $\delta \beta^{(1)}(\kappa) =0$ and 
\[\beta^{(1)}(\kappa)=\beta^{(1)}(\kappa,\alpha_*=0).\]

Thus, as $\dot{Q}(0)$ does not depend on $\alpha_*$, $\delta \dot{Q}(0)=0$. 

Moreover, we have
\[\delta \ddot{Q}(0)= -\frac{1}{M}\restriction{\sigma(q+ps)}{s=0}\]
and, by successive derivations of $Q(s)$, using~\eqref{eq:motionlaw2}, identifying the term on the hyperplane $\{\alpha_*=0\}$ and 
isolating the quantities depending on inverse power of $M$ from $k_{\textrm{scatt}}$, we obtain for $k\geq 2$
\begin{equation}\label{eq:deltaQk}
\overline{\delta Q^{(k+1)}(0)}=\dfrac{1}{M}\|p\|^{k-1}+\sum_{j=0}^{k-3}\dfrac{\tilde{\Phi}_k(k_{scatt})}{M^{(k-j+1)/2}}\|p\|^{j}, \
\end{equation}
where $\tilde{\Phi}_k(k_{scatt})$ is equal to $k_{scatt}^m$ with $0\leq m<k$. Then, combining with \eqref{eq:sum}, and identifying 
the terms of order $s$ which contribute to $\delta \beta^{(2)}_a(\kappa)$, it yields that, factorising by $M^{-1}$ (which goes to $
\alpha_*^2$), 
\[\delta \beta^{(2)}(\kappa)=-M^{-1/2}L_0(\|b\|)^2.\]

Ones can observe that, for all $k\geq 0$,  $\overline{\delta Q^{(k+1)}(0)}$ has no term of order $\|p\|^{-(k-2)}$ and hence, no term of 
order $s^2$ in $\delta\left( \dot{Q}(s)\sigma(q+ps)\right)$ . It yields that \[\delta \beta^{(3)}_a(\kappa)=0.\]
Moreover, controlling the terms of order more that $\alpha_*^2$ and combining \eqref{eq:E_a}, \eqref{eq:DeltaEa1}, \eqref{eq:sum} 
and \eqref{eq:deltaQk}, we obtain
\begin{multline}\label{eq:DeltaEa=}
\Delta E_a(\|p\|,\alpha_*,M,\kappa)= \alpha_*\left(\dfrac{\beta^{(1)}(\kappa)}{\|p\|}+O_0\left(M^{-1/2}\|p\|^{-2}\right)+O\left(M^{-3/2}\|
p\|^{-4}\right)\right)\\
+\alpha_*^2\left(\dfrac{\delta \beta^{(2)}(\kappa)}{\|p\|^2}+\dfrac{\delta \beta^{(4)}_a(\kappa)}{\|p\|^4}+O\left(M^{-3/2}\|p\|^{-5}
\right)\right)+o(\alpha_*^2\sqrt{M}),
\end{multline}
where $\delta \beta^{(4)}_a(\kappa)$ not explicitly computed, but we can have the following estimate by identifying the terms of order 
$s^3$ in \eqref{eq:sum} 
\[\sup_\kappa \left |\delta \beta^{(4)}_a\right | \leq \Phi_3(k_*) M^{-1},\]
and the error term is controlled by $o(\alpha_*^2\sqrt{M})$. 

 As $\Delta E_b$ already depends on $\alpha_*^2$, we just expand its integrand in $s$ and identifies the terms which do not depend 
 on $\alpha_*$. It remains to expand in $s$
\[\dot{Q}(s,\alpha_*=0) \nabla K_0(s\|p\|, \|b\|)\cdot \nabla \sigma(q+ps).\]
We proceed similarly as for $\Delta E_a$ and we obtain 
\begin{multline}\label{eq:sumb}
\dot{Q}(s)\nabla K_0(s\|p\|,\|b\|)\cdot \nabla\sigma(q+ps)\\
=\sum_{n=0}^N\sum_{k=0}^n C_n^k Q^{(k+1)}(0)s^k\dfrac{\rd^{n-k}}{\rd s^{n-k}}\restriction{\nabla K_0(s\|p\|,\|b\|)\cdot \nabla
\sigma(q+ps)}{s=0}+ O(s^{N+1}),
\end{multline}
where, for all $n\leq N$, for all $k\leq n$, $C_{k,n}$ is a constant. Then we identify the terms of order $\alpha_*^2$ in $\Delta E_b$, it 
remains to identify those that do not depend on $\alpha_*$ in~\eqref{eq:sumb}. For that, we need $Q^{(k+1)}(0,\alpha_*=0)$ already 
given in \eqref{eq:Q_ell_alpha0}. As $\Delta E_b$ does not generate any contribution f order $\|p\|^{-1}$ and $\|p\|^{-2}$, combining 
\eqref{eq:E_b}, \eqref{eq:sumb} and \eqref{eq:Q_ell_alpha0} gives us
\begin{equation}\label{eq:DeltaEb=}
\Delta E_b=\alpha_*^2\left(O_0(\sqrt{M}\|p\|^{-3}+ \dfrac{\beta^{(4)}_b(\kappa,\alpha_*=0)}{\|p\|^{-4}}+O\left(M^{-1/2}\|p\|^{-5}\right)
\right)+o(\alpha_*\sqrt{M}).
\end{equation}

Finally, we proceed in the same way for $\Delta E_c$. One observes that $\Delta E_c$ contributes only to the term of order $\|p\|^{-
\ell}$, $\ell \geq 4$. We obtain
\begin{equation}\label{eq:DeltaEc=}
\Delta E_c=\alpha_*^2\left(\dfrac{\beta^{(4)}_c(\kappa,\alpha=0)}{\|p\|^{-4}}+O\left(M^{-1/2}\|p\|^{-5}\right))\right)+o(\alpha_*
\sqrt{M}).
\end{equation}

Moreover, it is shown in~\cite{equilibration} that 
\begin{equation}\label{eq:result_equi}
\overline{\delta \beta^{(4)}_a}+\overline{\beta^{(4)}_b(\alpha=0)}+\overline{\beta^{(4)}_c(\alpha=0)}= \Sigma_1^2 \left( \dfrac{d-3}{2}
+\frac{M\overline{U''}}{\overline{P^2}}\dfrac{\overline{\int_0^1\rd \lambda \int_0^\lambda \rd \lambda' K_0(\lambda',\|b\|)\sigma(b+
\lambda' e)}}{\overline{L_0^2}}\right).
\end{equation}
Finally,  \eqref{eq:DeltaEa=}, \eqref{eq:DeltaEb=}, \eqref{eq:DeltaEc=} and \eqref{eq:result_equi} yield the result.

\subsection{Proof of Theorem~\ref{thm:averaging}}

In this proof, we replace the notation of the Markov chain $\left(E_n(\alpha_*,M)\right)_n$ describing the kinetic energy of the particle 
in \eqref{eq:finalMC} by $\left(\xi_n(\alpha_*,M)\right)_n$.

The scheme of the proof of Theorem~\ref{thm:averaging} is quite classical. We first give and show a result that will allow us to 
conclude to the existence of a converging sub-families. Then we will show that all these converging sub-families have the same limit. 
Thus, we can conclude that it is the whole family that converges to this limiting process. First of all, we need the following lemma.
\begin{lemma}\label{lem:AtoL} For all $\alpha_*>0$ and $M\gg 1$, let $\Pi_{(\alpha_*,M)}$ be the transition function of the Markov 
chain $\left(\xi(\alpha_*,M,\tau_n)\right)_n$ and let  $\left( \Pi_{(\alpha_*,M)}-\mathrm{I}\right)$ be  its generator.
\begin{itemize}
\item[i)]Let $\left( \L,\D_*\right)$ the operator defined in \eqref{eq:pbmartalpha}-\eqref{eq:pbmartalpha2}. For all $f\in \D_*$ and for 
$x\in [\xi_+,+\infty)$
\[\lim_{\substack{\alpha_*\to 0 \\\alpha \to 0\\M\to +\infty}}\left| \L f(x)-\frac{1}{\alpha_*^2}(\Pi_{(\alpha_*,M)}-\mathrm{I})f(x)\right| =0.\]
\item[ii)] The family $\left(\xi(\alpha_*,M,\tau),\, \tau\in [0,1]\right)_{(\alpha_*,M)}$ is pre-compact.
\end{itemize}
\end{lemma}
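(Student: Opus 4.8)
\textbf{Proof proposal for Lemma~\ref{lem:AtoL}.}

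The plan is to treat the two items with the two classical tools for deriving a diffusion limit of a Markov chain: a generator-convergence computation for i) and a Kolmogorov-type tightness estimate for ii). For i), I would start from the explicit one-step law of $\xi_{n+1}$ given by \eqref{eq:finalMC} and \eqref{eq:defF}, so that for $x\geq\xi_+$ not too close to the reflecting boundary the increment $\xi_{n+1}-x$ equals, up to the reflection correction, the right-hand side of \eqref{eq:defF} minus $x$. The key point is that $(\Pi_{(\alpha_*,M)}-\mathrm{I})f(x)=\overline{f(F(x,\alpha_*,M,\kappa))}-f(x)$ (plus a reflection term), and I would Taylor-expand $f$ to second order:
\[
f(F(x,\alpha_*,M,\kappa))-f(x)=f'(x)\,(F-x)+\tfrac12 f''(x)\,(F-x)^2+O\!\left((F-x)^3\right).
\]
Now insert \eqref{eq:defF}. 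The term of order $\alpha_*$ in $F-x$ is $\alpha_*\beta^{(1)}(\kappa)/(2x)^{1/2}$ plus $O_0$-terms; since $\overline{\beta^{(1)}}=0$ by \eqref{eq:beta1} and the other order-$\alpha_*$ pieces are $O_0$, the whole order-$\alpha_*$ contribution to $\overline{f'(x)(F-x)}$ vanishes after taking $\overline{\;\cdot\;}$. The surviving drift comes from the $\alpha_*^2$ part of $F-x$, namely $\alpha_*^2\big(\overline{\delta\beta^{(2)}}/x+\overline{\delta\beta^{(4)}}/x^2\big)$, which after dividing by $\alpha_*^2$ and using $x=$ energy $=\|p\|^2/2$ (so $1/x$ versus $1/(2x)$ bookkeeping) yields exactly the first-order coefficient in \eqref{eq:pbmartalpha2}. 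The diffusion coefficient comes from $\tfrac12 f''(x)\overline{(F-x)^2}$: to leading order $(F-x)^2=\alpha_*^2(\beta^{(1)}(\kappa))^2/(2x)+\cdots$, and $\overline{(\beta^{(1)})^2}=\Sigma_1^2$ by the definition in \eqref{eq:beta4}, giving $\tfrac12\,\Sigma_1^2/(2x)\,f''(x)$ after dividing by $\alpha_*^2$. One then checks that everything else is negligible in the stated triple limit: the $O_0(M^{-1/2}\|p\|^{-2})$ and $O(M^{-3/2}\|p\|^{-4})$ remainders at order $\alpha_*$ contribute $O(\alpha_*^{-1}M^{-1/2})\to 0$ (here is where $\alpha_*\to0$ with $\alpha_*^2\sqrt M$-type control enters, via $\alpha=\alpha_*\sqrt M\to0$), the $O_0(M^{1/3}\|p\|^{-3})$ term contributes $O(M^{1/3})$ but has zero average so only enters through $\overline{(F-x)^2}$ at order $\alpha_*^4 M^{2/3}$, again vanishing, and the $o(\alpha_*^2\sqrt M)=o(\alpha_*\alpha)$ error divided by $\alpha_*^2$ is $o(\sqrt M)\cdot$nothing — this needs the hypothesis $\alpha\to0$ to kill it. For $x$ near $\xi_+$ the reflection term in \eqref{eq:finalMC} replaces $F$ by $2\xi_+-F$; since $f\in\D_*$ satisfies $f'(\xi_+)=0$, the reflected increment is handled by the same Taylor expansion with the leading $f'$ term suppressed, and one recovers $\L f$ in the limit with no extra boundary contribution — this is precisely why $\D_*$ imposes $\lim_{x\to\xi_+}f'(x)=0$.

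For ii), I would invoke Kolmogorov's tightness / Aldous-type criterion for the continuous interpolations $\xi(\alpha_*,M,\tau)$ on $[0,1]$. Because the interpolation is piecewise linear between the lattice times $\tau_n=\alpha_*^2 n$, it suffices to bound increments of the chain over $\lfloor s/\alpha_*^2\rfloor$ steps. The natural estimate is a fourth-moment bound
\[
\mathbb{E}\!\left[\left(\xi(\alpha_*,M,t)-\xi(\alpha_*,M,s)\right)^4\right]\leq C\,(t-s)^2,
\]
uniformly in $(\alpha_*,M)$ in the relevant regime. One proves this by writing $\xi_{m}-\xi_{n}=\sum_{k=n}^{m-1}\big(\xi_{k+1}-\xi_k\big)$, decomposing each increment into its conditional mean (of size $O(\alpha_*^2/\xi_k)$, hence summable to $O((t-s))$ drift contribution) and a martingale difference (of conditional variance $O(\alpha_*^2\Sigma_1^2/\xi_k)$), and then applying the Burkholder–Davis–Gundy inequality (or a direct moment computation) to the martingale part; the drift part is controlled deterministically. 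The $O_0$ remainders contribute martingale differences whose conditional variances are smaller by powers of $M^{-1}$ or $\|p\|^{-1}$, so they do not spoil the bound provided one first argues — via a stopping-time argument using the reflection at $\xi_+$ — that $\xi_k\geq\xi_+$ for all $k$, so all the inverse powers $1/\xi_k$ are bounded by $1/\xi_+$. The large-$M$ dependence is the only delicate bookkeeping: the worst remainder, $O_0(M^{1/3}\|p\|^{-3})$, enters the conditional variance as $\alpha_*^4 M^{2/3}\xi_k^{-3}$, which summed over $\sim \alpha_*^{-2}$ steps gives $\alpha_*^2 M^{2/3}\xi_+^{-3}\to0$; similarly the deterministic $o(\alpha_*^2\sqrt M)$ piece sums to $o(\sqrt M)$ over one unit of $\tau$ — wait, that does not obviously vanish, so one actually needs the joint scaling hypothesis and I would make explicit that $\alpha_*^2\sqrt M = \alpha\,\alpha_*\to0$ is what controls it. Precompactness in $C([0,1])$ then follows from this moment bound together with tightness of the initial condition (which is deterministic, $E_0\gg\xi_+$).

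The main obstacle I expect is the careful control of the $M$-dependent remainder terms in the triple limit $\alpha_*\to0,\ \alpha\to0,\ M\to+\infty$: the expansion \eqref{eq:dlmc} contains terms that individually blow up as $M\to\infty$ (the $O_0(M^{1/2}\|p\|^{-3})$ and $O_0(M^{1/3}\|p\|^{-3})$ pieces, and the $o(\alpha_*^2\sqrt M)$ error), and the whole argument only works because (a) they either have zero $\kappa$-average, so enter the generator only quadratically, or (b) they are killed by the constraint that $\alpha=\alpha_*\sqrt M$ also tends to zero. Keeping straight which smallness is supplied by which of the three limits, and verifying that the reflection mechanism keeps $\xi_n$ bounded below so that no inverse power of the energy ever becomes large, is the real content; the Taylor expansion and the BDG moment estimate are otherwise routine.
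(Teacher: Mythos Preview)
Your treatment of i) is essentially the paper's: a second-order Taylor expansion of $f$ about $x$, identification of the first and second moments of the increment using the structure of $F$ in \eqref{eq:defF}, the observation that all order-$\alpha_*$ pieces have zero $\kappa$-average, and the use of $f'(\xi_+)=0$ to absorb the reflection. The paper organises this slightly differently by introducing an intermediate operator $\A_{(\alpha_*,M)}f=\tfrac12 a_{(\alpha_*,M)}f''+b_{(\alpha_*,M)}f'$ built from the first two moments, and then bounding $|\L f-\alpha_*^{-2}\A f|$ and $\alpha_*^{-2}|\A f-(\Pi-\mathrm I)f|$ separately, the latter via the third absolute moment; but the content is the same.

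For ii) you take a genuinely different route. You propose a Kolmogorov--BDG fourth-moment bound on the increments; the paper instead observes that the uniform generator bound $\alpha_*^{-2}\|(\Pi_{(\alpha_*,M)}-\mathrm I)f\|_\infty\le \widetilde C_f$ (which follows from the computation in i)) makes $\big(f(\xi(\alpha_*,M,\tau_n))+\widetilde C_f\tau_n\big)_n$ a sub-martingale, notes that the one-step jump size is $O(\alpha_*)$ so that $\P(|\xi(\tau_j)-\xi(\tau_{j-1})|>\delta)=0$ eventually, and then invokes a black-box precompactness criterion (Theorem~1.4.11 in Stroock--Varadhan). The paper's approach is more economical precisely because it recycles i): once the rescaled generator is uniformly bounded, no further moment estimates are needed, and in particular the delicate bookkeeping of the $M$-dependent remainders that you (rightly) flag as the main obstacle in your BDG scheme --- the $O_0(M^{1/2}\|p\|^{-3})$ and $o(\alpha_*^2\sqrt M)$ pieces summed over $\sim\alpha_*^{-2}$ steps --- simply does not arise. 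Your approach would work but costs more; the sub-martingale route is the cleaner shortcut here.
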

\begin{proof}
For all $x\in [\xi_+,+\infty)$, consider the following coefficients
\begin{align}
\label{eq:a_and_b} a_{(\alpha_*,M)}(x)&=\int_{[\xi_+,+\infty)}(y-x)^2\Pi_{(\alpha_*,M)}(x,\rd y)\\
\nonumber  b_{(\alpha_*,M)}(x)&=\int_{[\xi_+,+\infty)}(y-x)\Pi_{(\alpha_*,M)}(x,\rd y)
\end{align}
and the operator 
\begin{equation}\label{eq:operator_A}
\A_{(\alpha_*,M)}f(x)=\frac12 a_{(\alpha_*,M)}(x)f''(x)+b_{(\alpha_*,M)}(x)f'(x).
\end{equation}
As the Markov chain $(\xi_n(\alpha_*,M))_n$ is time-homogeneous, the coefficients $a_{(\alpha_*,M)}(x)$ and $b_{(\alpha_*,M)}(x)$ 
are respectively the momentum of order $1$ and $2$ of $\xi(\tau_1)$ conditionally to the event $\xi(0)=x$. 

Therefore, for all $\alpha_*>0$ and for all $M\gg 1$ and for all $x\in [\xi_+,+\infty)$, we have
\begin{multline}\label{eq:Lf-(Pi-I)f_ineq}
\left| \L f(x)-\dfrac{1}{\alpha_*^2}(\Pi_{(\alpha_*,M)}-\mathrm{I}) f(x)\right|\leq \left|\L f(x)-\frac{1}{\alpha_*^2}\A_{(\alpha_*,M)}f(x)
\right| \\
+\frac{1}{\alpha_*^2}\left| \A_{(\alpha_*,M)} f(x)-(\Pi_{(\alpha_*,M)}-\mathrm{I})f(x)\right|.
\end{multline}
First, we will compute the limit for $\alpha_* \to 0$, $\alpha\to 0$ and $M\to +\infty$ for all $x\in [\xi_+,+\infty)$ of $a_{(\alpha_*,M)}
(x)$ and $b_{(\alpha_*,M)}(x)$. Using \eqref{eq:a_and_b}, we have
\begin{multline}\label{eq:a_esperance_pas}
a_{(\alpha_*,M)}(x)=\E\Big[\left(F(\alpha_*,M,x,\kappa)-x\right)^2\mathds{1}_{F(\alpha_*,M,x,\kappa)>\xi_+}\\
+\left(2\xi_+-F(\alpha_*,M,x,\kappa)-x\right)^2\mathds{1}_{F(\alpha_*,M,x,\kappa)\leq\xi_+}\Big]
\end{multline}
and
\begin{multline}\label{eq:b_esperance_pas}
b_{(\alpha_*,M)}(x)=\E\Big[\left(F(\alpha_*,M,x,\kappa)-x\right)\mathds{1}_{F(\alpha_*,M,x,\kappa)>\xi_+}\\
+\left(2\xi_+-F(\alpha_*,M,x,\kappa)-x\right)\mathds{1}_{F(\alpha_*,M,x,\kappa)\leq\xi_+}\Big].
\end{multline}
As the step of the Markov chain is in $\alpha_*$, for all $x>\xi_+$, there exists $\tilde{\alpha}_*$ such that  for all $\alpha_*<
\tilde{\alpha}_*$,
\begin{equation}\label{eq:indicF>xi}
\mathds{1}_{F(\alpha_*,M,x,\kappa)>\xi_+}=1\qquad \text{almost surely}.
\end{equation}
Thus, by Proposition~\ref{prop:dlmc}, for all $x>\xi_+$, there exists $\tilde{\alpha}_*$ such that  for all $\alpha_*<\tilde{\alpha}_*$,
\begin{align*}
a_{(\alpha_*,M)}(x)&=\E\left[\left(F(\alpha_*,M,x,\kappa)-x\right)^2\right]\\
&= \alpha_*^2\Big(\dfrac{\Sigma_1^2}{2x}+O(M^{-1}x^{-2})+O(M^{-3}x^{-4})+O(M^{-1/2}x^{-3/2})\\
& \qquad \qquad \qquad \qquad\qquad \qquad \qquad +O(M^{-3/2}x^{-5/2})+O(M^{-2}x^{-3})\Big)+o(\alpha_*^2\sqrt{M})
\end{align*}
Moreover, by Proposition~\ref{prop:dlmc}, for all $x>\xi_+$, there exists $\tilde{\alpha}_*$ such that  for all $\alpha_*<\tilde{\alpha}_*
$,
\begin{align*}
b_{(\alpha_*,M)}(x)&=\E\left(F(\alpha_*,Mx,\kappa)-x\right)\\
&=\alpha_* O\left(M^{-3/2}x^{-2}\right)+\alpha_*^2\left(\dfrac{\overline{\delta\beta^{(2)}}}{2x}+\dfrac{\overline{\delta\beta^{(4)}}}
{4x^2}+O\left(M^{-1/2}x^{-5/2}\right)\right)+o(\alpha_*^2\sqrt{M}),
\end{align*}
and 
\[b(x)=\lim_{\substack{\alpha_*\to 0 \\\alpha \to 0\\M\to +\infty}}\frac{1}{\alpha_*} b_{(\alpha_*,M)}(x)=\dfrac{\overline{\delta 
\beta^{(2)}}}{2x}+\dfrac{\overline{\delta \beta^{(4)}}}{4x^2}.\]

For all $x>\xi_+$, let 
\[a(x)=\lim_{\substack{\alpha_*\to 0 \\\alpha \to 0\\M\to +\infty}}a_{(\alpha_*,M)}(x)/\alpha_*^2 \text{ and } b(x)=
\lim_{\substack{\alpha_*\to 0 \\\alpha \to 0\\M\to +\infty}}b_{(\alpha_*,M)}(x)/\alpha_*^2,\]
 then, using Proposition~\ref{prop:dlmc}, we have
\begin{equation}\label{eq:a(x)}
a(x)=\frac{\Sigma_1^2}{2x},
\end{equation}
and 
\begin{equation}\label{eq:b(x)}
b(x)=\dfrac{\overline{\delta \beta^{(2)}}}{2x}+\dfrac{\overline{\delta \beta^{(4)}}}{4x^2}
\end{equation}
Now, let $x=\xi_+$, we can easily verify that
\[\lim_{\substack{\alpha_*\to 0 \\ \alpha \to 0\\M\to +\infty}}\frac{1}{\alpha_*^2}a_{(\alpha_*,M)}(\xi_+)=\frac{\Sigma_1^2}{2\xi_+}=a(\xi_
+).\]

However, the drift coefficient does not satisfy 
\[\lim_{\substack{\alpha_*\to 0 \\ \alpha \to 0\\M\to +\infty}}\frac{1}{\alpha_*^2}
b_{(\alpha_*,M)}(\xi_+)=b(\xi_+),\]
 indeed
\begin{multline*}
b_{(\alpha_*,M)}(\xi_+)=\lim_{x\to \xi_+}b_{(\alpha_*,M)}(x)\left(2\P\left(F_{(\alpha_*,M)}(\xi_+,\kappa)\geq 0\right)-1\right)\\
+\xi_+\left(1-\P\left(F_{(\alpha_*,M)}(\xi_+,\kappa)\geq 0\right)\right)
\end{multline*}
and 
\[\lim_{\substack{\alpha_*\to 0 \\ \alpha \to 0\\M\to +\infty}}\frac{1}{\alpha_*^2}b_{(\alpha_*,M)}(\xi_+)\neq \lim_{x\to \xi_+}b(x).\]
Thus, for all $f\in \D_*$, and for all $w\geq \xi_+$, 
\[\lim_{\substack{\alpha_*\to 0 \\\alpha \to 0\\M\to +\infty}}\left| \L f(x)-\frac{1}{\alpha_*^2}\A_{(\alpha_*,M)} f(x)\right|= 0.
\]

It remains to control the second term of \eqref{eq:Lf-(Pi-I)f_ineq}. With a Taylor development of order $2$ and using 
\eqref{eq:a_and_b}, we have that for all $x\geq \xi_+$, there exists $z\in (x,y)$ such that
\begin{equation}\label{eq:pialpha5}\left( \Pi_{(\alpha_*,M)}-\mathrm{I}\right) f(x)\leq \A_{(\alpha_*,M)} f(x)+\frac16\int_{[\xi_+,+
\infty)}f^{(3)}(z)(y-x)^3\Pi_{(\alpha_*,M)}(x,\rd y).
\end{equation}
Moreover, by \eqref{eq:finalMC}, we have
\begin{align*}
\left\vert \int_{[\xi_+,+\infty)}f^{(3)}(z)(y-x)^3\Pi_{(\alpha_*,M)}(x,\rd y)\right\vert &\leq \|f^{(3)}\|_\infty \int_{[\xi_+,+\infty)}|y-x|
^3\Pi_{(\alpha_*,M)}(x,\rd y)\\
\leq\|f^{(3)}\|_\infty & \Big(\E\left( |F_{(\alpha_*,M)}(x,\kappa)-x|^3\mathds{1}_{F_{(\alpha_*,M)}(x,\kappa)\geq \xi_+}\right)\\
+\E\Big(& |2 \xi_++F_{(\alpha_*,M)}(x,\kappa)-x|^3\mathds{1}_{F_{(\alpha_*,M)}(x,\kappa)< \xi_+}\Big)\Big).
\end{align*}
Using \eqref{eq:indicF>xi}, we obtain
\[\left\vert \int_{[\xi_+,+\infty)}f^{(3)}(z)(y-x)^3\Pi_{(\alpha_*,M)}(x,\rd y)\right\vert\leq {\|f^{(3)}\|_\infty} O(\alpha_*^3). \]
Combining the latter with \eqref{eq:pialpha5}, we thus obtain that for all $x\geq \xi_+$, 
\[\left \vert \frac{1}{\alpha_*^2}(\Pi_{(\alpha_*,M)}-\mathrm{I})f(x)-\frac{1}{\alpha_*^2} \A_{(\alpha_*,M)} f(x) \right\vert\ 
\longmapsto_{\substack{\alpha_*\to 0 \\ \alpha \to 0\\M\to +\infty}} 0,\]
which shows the first affirmation.

We, now, show the statement ii). In order to show the pre-compactness of the family $\left( \xi(\alpha_*,M,\tau),\, \tau\in [0,1]
\right)_{(\alpha_*,M)}$ , we first remark that by \eqref{eq:pialpha5} and \eqref{eq:a(x)}, \eqref{eq:b(x)}, for all $f\in \mathcal{D}_*$ 
there exists a constant $\widetilde{C}_f>0$ such that,
\begin{equation}\label{eq:ACfalpha}
\frac{1}{\alpha_*^2}\|(\Pi_{(\alpha_*,M)}-\mathrm{I}) f\|_\infty\leq \widetilde{C}_f.
\end{equation}
It follows that for all $f\in \mathcal{D}_*$,
\begin{align}
\nonumber \E\left(f(\xi(\alpha_*,M,\tau_{n+1}))+\widetilde{C}_f\tau_{n+1}\mid \mathcal{F}^{\xi_{(\alpha_*,M)}}_n
\right)&=f(\xi(\alpha_*,M,\tau_n))+\widetilde{C}_f\tau_n\\
& +(\Pi_{(\alpha_*,M)}-\mathrm{I})f(\xi(\alpha_*,M,\tau_n))+
\widetilde{C}_f\alpha_*^2\\
\label{eq:lastalpha}&\geq f(\xi(\alpha_*,M,\tau_n))+\widetilde{C}_f \tau_n.
\end{align}
Then, the process $\left( f(\xi(\alpha_*,M,\tau_n)+\widetilde{C}_f \tau_n\right)_n$  is a sub-martingale. 
Next, each step of the Markov chain $\left( \xi(\alpha_*,M,\tau_n)\right)_n$ has a size of order $\alpha_*$. So, for all $\delta>0$, 
there exists $\tilde{\alpha_*}$ such that, for all $\alpha_*<\tilde{\alpha_*}$, we have
\[ \P\left(\left\vert \xi(\alpha_*,M,\tau_j)-\xi(\alpha_*,M,\tau_{j-1})\right\vert >\delta\right)=0,\]
and,
\begin{equation}\label{eq:controlstepalpha}
\lim_{\substack{\alpha\to 0\\ \alpha \to 0 \\M\to +\infty}}\sum_{j=1}^{\lfloor \frac{1}{\alpha_*^2}\rfloor}\P\left(\left\vert \xi(\alpha_*,M,
\tau_j)-\xi(\alpha_*,M,\tau_{j-1})\right\vert >\delta\right)=0,
\end{equation}
By the Theorem~$\mathrm{1.4.11}$ of \cite{Stroock}, these two last statements, combined with \eqref{eq:lastalpha}, imply the pre-compactness of the family $\left(\xi(\alpha_*,M,\tau),\, \tau\in [0,1]\right)_{(\alpha_*,M)}$. \qedhere
\end{proof}
Now, we are able to give a proof of Theorem~\ref{thm:averaging}.
\begin{proof}[Proof of Theorem~\ref{thm:averaging}]
statement ii) of Lemma~\ref{lem:AtoL} allows us to establish the existence of decreasing sequences $\{\alpha_{*}(k)\}_k$ with 
positive values such that $\alpha_{*}(k)\to 0$ as $k\to +\infty$ and such that the sub-families $\left( \xi^{\alpha_*(k)}(\tau),\, \tau\in 
[0,1]\right)_{k\in \N}$ converge weakly to limiting processes $\left( \xi(\tau)\right)_{\tau\in [0,1]}$. Consequently, these processes take 
values in $[\xi_+,\, +\infty)$.
Then,  Skorohod's Representation Theorem implies the existence of a probability space $\left(\widetilde{\Omega}, \, 
\widetilde{\mathcal{F}},\, \widetilde{\P}\right)$ and of processes
\[\left(\tilde{\xi}(\alpha_*(k),M,\tau),\, \tau\in [0,1]\right)_{k\in \N}\quad\text{and}\quad \left(\tilde{\xi}(\tau)\right)_{\tau\in [0,1]}\]
respectively with same distribution 
\[\left( \xi(\alpha_*(k),M,\tau),\, \tau\in [0,1]\right)_{k\in \N}\quad \text{and}\quad \left( \xi(\tau)\right)_{\tau\in [0,1]},\]
and such that
\[\sup_{\tau\in [0,1]}\left\vert \tilde{\xi}(\tau)-\tilde{\xi}(\alpha_*(k),M,\tau)\right\vert \longmapsto_{k\to +\infty}0, \qquad \widetilde{\P}
\text{-almost surely.}\]
\begin{lemma}\label{lem:tildealpha}
\begin{itemize} 
For all $f\in \mathcal{D}_*$. We have the two following properties on the Skorohod's space.
\item[i)] Let $0\leq u_1<u_2\leq 1$, then, 
\begin{multline*}
 f\left(\tilde{\xi}(\alpha_*(k),M,\lfloor \frac{u_2}{\alpha_*(k)^2} \rfloor\alpha_*(k)^2 \right) -f\left(\tilde{\xi}(\alpha_*(k),M,\lfloor 
 \frac{u_1}{\alpha_*(k)^2} \rfloor\alpha_*(k)^2  \right)\\
-\sum_{j=\lfloor \frac{u_1}{\alpha_*(k)^2} \rfloor}^{\lfloor \frac{u_2}{\alpha_*(k)^2} \rfloor-1}(\Pi_{(\alpha_*(k),M)}-\mathrm{I})f
\left(\tilde{\xi}(\alpha_*(k),M,\tau_j)\right)
\end{multline*}
converges $\widetilde{\P}$ almost surely as $k\to +\infty$, to
\[ f\left(\tilde{\xi}(u_2\right)-f\left(\tilde{\xi}(u_1\right)-\int_{u_1}^{u_2}\mathcal{L}f\left( \tilde{\xi}(s)\right)ds, \qquad \widetilde{\P}
\text{-p.s.}\]
\item[ii)] The limit process $\left(\widetilde{M}(\tau)\right)_{\tau}$,
\[\widetilde{M}(\tau)=f\left( \tilde{\xi}(\tau)\right)-\int_{0}^{\tau} \mathcal{L}f \left(\tilde{\xi}(s) \right)ds\]
is a $\mathcal{F}_{\tau}^{\tilde{\xi}}$-martingale.
\end{itemize} 
 \end{lemma}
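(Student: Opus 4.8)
The plan is to recognise the displayed quantity in i) as the increment, between the discrete instants $\lfloor u_1/\alpha_*(k)^2\rfloor$ and $\lfloor u_2/\alpha_*(k)^2\rfloor$, of the Dynkin martingale attached to the chain $(\xi(\alpha_*(k),M,\tau_n))_n$, and then to pass to the limit $k\to+\infty$ with the help of Lemma~\ref{lem:AtoL}~i), of the uniform bound \eqref{eq:ACfalpha}, and of the $\widetilde{\P}$-almost sure uniform convergence $\sup_{\tau\in[0,1]}|\tilde\xi(\tau)-\tilde\xi(\alpha_*(k),M,\tau)|\to0$ supplied by Skorohod's representation theorem. Statement ii) will then follow from i) by a bounded-convergence argument applied to the defining martingale identity of the discrete chains.

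For i), I would set $g_k:=\alpha_*(k)^{-2}\bigl(\Pi_{(\alpha_*(k),M)}-\mathrm{I}\bigr)f$. Since $\lfloor u_i/\alpha_*(k)^2\rfloor\alpha_*(k)^2\to u_i$, the a.s. uniform convergence of $\tilde\xi(\alpha_*(k),M,\cdot)$ to the continuous process $\tilde\xi(\cdot)$ and the continuity of $f$ on $[\xi_+,+\infty)$ force the two boundary terms to converge $\widetilde{\P}$-a.s. to $f(\tilde\xi(u_2))$ and $f(\tilde\xi(u_1))$. The remaining sum is $\sum_j\alpha_*(k)^2\,g_k\bigl(\tilde\xi(\alpha_*(k),M,\tau_j)\bigr)$, and I would control it through the splitting
\[\Bigl|\sum_j\alpha_*(k)^2\,g_k\bigl(\tilde\xi(\alpha_*(k),M,\tau_j)\bigr)-\int_{u_1}^{u_2}\L f\bigl(\tilde\xi(s)\bigr)\,\rd s\Bigr|\le \Sigma_k^{(1)}+\Sigma_k^{(2)},\]
where $\Sigma_k^{(2)}$ is the error of a Riemann sum of mesh $\alpha_*(k)^2$ for the continuous function $s\mapsto\L f(\tilde\xi(s))$ and $\Sigma_k^{(1)}=\sum_j\alpha_*(k)^2\bigl|(g_k-\L f)(\tilde\xi(\alpha_*(k),M,\tau_j))\bigr|$. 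Since, for $k$ large, all the processes take values in a fixed compact $K\subset[\xi_+,+\infty)$ and $\L f$ is uniformly continuous there, $\Sigma_k^{(2)}\to0$. For $\Sigma_k^{(1)}$ I would split the indices $j$ according to whether $\tilde\xi(\alpha_*(k),M,\tau_j)$ lies in an $\eps$-layer $[\xi_+,\xi_++\eps]$ or not: on the complementary indices one uses that the convergence $g_k\to\L f$ of Lemma~\ref{lem:AtoL}~i) is uniform on compact subsets of $(\xi_+,+\infty)$, while on the indices inside the layer one uses the uniform bound $\|g_k\|_\infty\le\widetilde C_f$ from \eqref{eq:ACfalpha} together with a bound, uniform in $k$ and vanishing as $\eps\to0$, on the occupation time of that layer by the chains; letting $k\to+\infty$ and then $\eps\to0$ gives $\Sigma_k^{(1)}\to0$, whence i).

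For ii), I would start from the fact that, for each fixed $k$, the process $N^{(k)}_n:=f(\tilde\xi(\alpha_*(k),M,\tau_n))-f(\tilde\xi(\alpha_*(k),M,\tau_0))-\sum_{j=0}^{n-1}(\Pi_{(\alpha_*(k),M)}-\mathrm{I})f(\tilde\xi(\alpha_*(k),M,\tau_j))$ is a martingale for the natural filtration of the chain, since $\widetilde\E[f(\xi_{n+1})\mid\mathcal{F}_n]=\Pi_{(\alpha_*(k),M)}f(\xi_n)=f(\xi_n)+(\Pi_{(\alpha_*(k),M)}-\mathrm{I})f(\xi_n)$. Consequently, for any $0\le s_1<\dots<s_m<u_1<u_2\le1$ and any bounded continuous $h\colon\R^m\to\R$, and for all $k$ large enough that the multiplier is measurable with respect to the chain's filtration at time $\lfloor u_1/\alpha_*(k)^2\rfloor$,
\[\widetilde\E\Bigl[\bigl(N^{(k)}_{\lfloor u_2/\alpha_*(k)^2\rfloor}-N^{(k)}_{\lfloor u_1/\alpha_*(k)^2\rfloor}\bigr)\,h\bigl(\tilde\xi(\alpha_*(k),M,s_1),\dots,\tilde\xi(\alpha_*(k),M,s_m)\bigr)\Bigr]=0.\]
The difference $N^{(k)}_{\lfloor u_2/\alpha_*(k)^2\rfloor}-N^{(k)}_{\lfloor u_1/\alpha_*(k)^2\rfloor}$ is exactly the quantity treated in i), so it converges $\widetilde{\P}$-a.s. to $\widetilde M(u_2)-\widetilde M(u_1)$; it is moreover uniformly bounded in $k$ (the $f$-terms because $f$ is bounded on the compact $K$, the compensator sum because of \eqref{eq:ACfalpha}), while $h(\cdots)$ converges a.s. and is bounded. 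Bounded convergence then yields $\widetilde\E\bigl[(\widetilde M(u_2)-\widetilde M(u_1))\,h(\tilde\xi(s_1),\dots,\tilde\xi(s_m))\bigr]=0$; since, by continuity of $\tilde\xi$, functionals of this form generate $\mathcal{F}^{\tilde\xi}_{u_1}$, and since $\widetilde M(\tau)$ is bounded and $\mathcal{F}^{\tilde\xi}_\tau$-adapted, this is precisely the assertion that $(\widetilde M(\tau))_\tau$ is an $\mathcal{F}^{\tilde\xi}_\tau$-martingale.

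The step I expect to be the main obstacle is the behaviour near the reflecting boundary $\xi_+$ in the treatment of $\Sigma_k^{(1)}$. As was already observed in the proof of Lemma~\ref{lem:AtoL}, $\alpha_*^{-2}(\Pi_{(\alpha_*,M)}-\mathrm{I})f$ does \emph{not} converge uniformly up to $\xi_+$ (the drift part $\alpha_*^{-2}b_{(\alpha_*,M)}(x)$ becomes of order $\alpha_*^{-1}$ once $x-\xi_+$ is of order $\alpha_*$, so that on that layer $g_k$ is only known to be bounded, not close to $\L f$), and the passage from the Riemann-type sum to the integral therefore cannot rest on uniform convergence alone. It has to be pushed through the uniform bound \eqref{eq:ACfalpha} combined with the quantitative control, uniform in $k$ and vanishing as $\eps\to0$, of the time the chains spend inside an $\eps$-neighbourhood of $\xi_+$, which is where the estimate is most delicate; the boundary condition $f'(\xi_+)=0$ built into $\D_*$ is precisely what prevents the anomalous drift contribution from surviving in the limit. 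A secondary, purely bookkeeping, point is the adaptedness of the test functional for each finite $k$: it holds because each $s_i<u_1$ is fixed, so $\tilde\xi(\alpha_*(k),M,s_i)$ depends only on $\xi_0,\dots,\xi_{\lceil s_i/\alpha_*(k)^2\rceil}$, whose indices are $\le\lfloor u_1/\alpha_*(k)^2\rfloor$ once $\alpha_*(k)$ is small.
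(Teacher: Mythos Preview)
Your approach is the paper's: Dynkin martingale for the chain, Skorohod almost-sure convergence for the boundary terms, a Riemann-sum/generator-convergence splitting for the compensator, and then bounded convergence against test functionals $\phi(\tilde\xi(s_1),\dots,\tilde\xi(s_m))$ to transfer the martingale property to the limit. Two remarks are worth making.

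First, the paper decomposes the compensator into \emph{three} pieces rather than your two: it first replaces $g_k$ by $\L f$ at the chain's values (your $\Sigma_k^{(1)}$), then replaces the chain's values $\tilde\xi(\alpha_*(k),M,\tau_j)$ by the limit's values $\tilde\xi(\tau_j)$ (using the a.s.\ uniform convergence and the continuity of $\L f$ on the relevant compact), and only then recognises a genuine Riemann sum for $s\mapsto\L f(\tilde\xi(s))$. This extra middle step is needed because your $\Sigma_k^{(2)}$, as you wrote it, still has nodes at the chain's values and is therefore not literally a Riemann sum for the limit process; the paper's three-way split repairs exactly this.

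Second, on the boundary behaviour you flag as the main obstacle: the paper is in fact \emph{less} careful than you are. It simply cites the pointwise convergence of Lemma~\ref{lem:AtoL}~i) to conclude that its third term vanishes, with no boundary-layer splitting and no occupation-time argument. Your instinct that pointwise convergence alone is not quite enough near $\xi_+$ is correct, and your proposed remedy (uniform bound \eqref{eq:ACfalpha} on the layer together with a vanishing occupation-time estimate, the condition $f'(\xi_+)=0$ killing the anomalous drift contribution) has the right shape; but the occupation-time control you invoke is not established in the paper either. On this point your sketch is at least as complete as the paper's own proof.
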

Coming back to the initial probability space, this lemma assures that all processes $\left( \xi(\tau)\right)_{\tau}$ are solutions of the 
martingale problem associated to the operator $\left( \L ,\D_*\right)$ defined in  \eqref{eq:pbmartalpha} and 
\eqref{eq:pbmartalpha2}. We easily check that the coefficients $a(x)$ and $b(x)$ of the operator $\left( \L ,\D_*\right)$ are Lipschitz 
on the interval $[\xi_+,+\infty)$, hence, the martingale problem associated to this operator is well-posed.

As the coefficients of this martingale problem are Lipschitz on $[\xi_+,\, +\infty)$, it is well posed. As a result, the limit processes are 
all equal in distribution. In order to conclude this proof, we show that it is not only the sub-families that converge weakly to the 
solution of this martingale problem but the  whole family $\left( \xi(\alpha_*,M,\tau),\, \tau\in [0,1]\right)_{(\alpha_*,M)}$.
To this end, we will make a reductio ad absurdum. Suppose that there exist $\phi\in \C\left( [0,1],\R\right)$, $\eps>0$  and a 
decreasing sequence $\{\alpha_*(k)\}_k$ of positive numbers which tends to $0$ as $k\to +\infty$ such that, for all $k\in \N$,
\begin{equation}\label{eq:absurdealpha}
\left\vert \E\left( \phi(\xi(\alpha_*(k),M,\cdot)\right)-\E\left( \phi(\xi(\cdot))\right) \right\vert >\eps.
\end{equation}
But the family $\left( \xi(\alpha_*(k),M,\tau),\, \tau\in [0,1]\right)_{k\in \N}$ is still pre-compact. Then, we again can extract a 
converging sub-sequence $\{\alpha_{*}(\varphi(k))\}_k$ such that $\left( \xi(\alpha_{*}(\varphi(k)),M,\tau),\, \tau\in [0,1]\right)_{k\in 
\N}$ converges weakly. According to the foregoing, the limit process of this family is $\left(\xi(\tau)\right)_{\tau\in [0,1]}$. Then, 
\[\lim_{k\to +\infty} \E\left( \phi(\xi(\alpha_{*}(\varphi(k)),M,\cdot)\right)=\E\left( \phi(\xi(\cdot))\right),\]
and  \eqref{eq:absurdealpha} is absurd. Such a function $\phi$ does not exist and it concludes this proof.
\end{proof}
This appendix ends with the proof of Lemma~\ref{lem:tildealpha} used in that of Theorem~\ref{thm:averaging}.
\begin{proof}[Proof of Lemma~\ref{lem:tildealpha}]
i) The limiting processes $\tilde{\xi}$ are continuous in time as limit almost sure of  $\tilde{\xi}(\alpha_*(k),M,\cdot)$ which are 
continuous in time. Then, for all $f\in \mathcal{D}_*$,
\begin{align*}
  \left \vert f\left( \tilde{\xi}(\tau)\right)-f\left( \tilde{\xi}(\alpha_*(k)),M, \lfloor \dfrac{\tau}{\alpha_*(k)^2}\rfloor\alpha_*(k)^2\right)\right
  \vert \leq & ||f'||_{\infty}\Big( \left \vert \tilde{\xi}\left(\tau\right)- \tilde{\xi}\left(\lfloor \dfrac{\tau}{\alpha_*(k)^2}\rfloor
  \alpha_*(k)^2\right)\right \vert \\
+ \Big \vert \tilde{\xi}\Big( \lfloor \dfrac{\tau}{\alpha_*(k)^2}\rfloor\alpha_*(k)^2\Big)&-\tilde{\xi}\Big(\alpha_*(k),M, \lfloor \dfrac{\tau}
{\alpha_*(k)^2}\alpha_*(k)^2\rfloor\Big)\Big \vert \Big)\\
\xrightarrow[k\to +\infty]{} 0 \qquad & \widetilde{\P}-\text{a.s}.
\end{align*}
It remains to control, for all $f\in \mathcal{D}_*$,
\[\int_{u_1}^{u_2}\mathcal{L}f\left( \tilde{\xi}(s)\right)\rd s -\sum_{j=\lfloor \frac{\tau}{\alpha_*(k)^2}\rfloor}^{\lfloor \frac{u_2}
{\alpha_*(k)^2}\rfloor-1}(\Pi_{(\alpha_*(k),M)}-\mathrm{I})f\left( \tilde{\xi}(\alpha_*(k),M,\tau_j)\right)\] 
We have
\begin{align*}
\Big  \vert \int_{u_1}^{u_2}\mathcal{L}f\left( \tilde{\xi}(s)\right)\rd s &-\sum_{j=\lfloor \frac{u_1}{\alpha_*(k)^2}\rfloor}^{\lfloor \frac{u_2}
{\alpha_*(k)^2}\rfloor-1}(\Pi_{(\alpha_*(k),M)}-\mathrm{I})f\left( \tilde{\xi}(\alpha_*(k),M,\tau_j)\right)\Big \vert \\
\leq & \left \vert \int_{u_1}^{u_2}\mathcal{L}f\left( \tilde{\xi}(s)\right)\rd s -\alpha_*(k)^2\sum_{j=\lfloor \frac{u_1}{\alpha_*(k)^2}\rfloor}
^{\lfloor \frac{u_2}{\alpha_*(k)^2}\rfloor-1}\mathcal{L}f\left( \tilde{\xi}(\tau_j)\right)\right \vert \\
+\alpha_*(k)^2 & \left \vert \sum_{j=\lfloor \frac{u_1}{\alpha_*(k)^2}\rfloor}^{\lfloor \frac{u_2}{\alpha_*(k)^2}\rfloor-1}\mathcal{L}f\left( 
\tilde{\xi}(\tau_j)\right)-\sum_{j=\lfloor \frac{u_1}{\alpha_*(k)^2}\rfloor}^{\lfloor \frac{u_2}{\alpha_*(k)^2}\rfloor-1}\mathcal{L}f\left( 
\tilde{\xi}(\alpha_*(k),M,\tau_j)\right) \right \vert \\
+ \alpha_*(k)^2 & \Big \vert \sum_{j=\lfloor \frac{u_1}{\alpha_*(k)^2}\rfloor}^{\lfloor \frac{u_2}{\alpha_*(k)^2}\rfloor-1}\mathcal{L}f\left( 
\tilde{\xi}(\alpha_*(k),M,\tau_j)\right)\\
&- \frac{1}{\alpha_*(k)^2}\sum_{j=\lfloor \frac{u_1}{\alpha_*(k)^2}\rfloor}^{\lfloor \frac{u_2}{\alpha_*(k)^2}\rfloor-1}
(\Pi_{(\alpha_*(k),M)}-\mathrm{I})f\left( \tilde{\xi}(\alpha_*(k),M,\tau_j)\right) \Big \vert.
\end{align*}
The first term is an approximation of the integral and consequently tends to $0$ as $k \to +\infty$. The convergence almost surely of 
$\tilde{\xi}(\alpha_*(k),M,\cdot)$ to $\tilde{\xi}(\cdot)$ implies that the second term tends to $0$ too as $k\to +\infty$. The 
statement i) of Lemma~\ref{lem:AtoL} implies the convergence to $0$ as $k\to +\infty$ of the last term.
 For all $f\in \mathcal{D}_*$, let the process  $\left( \widetilde{S}(\alpha_*(k),M,\tau_n)\right)_{n}$ be defined by 
\[\widetilde{S}(\alpha_*(k),M,\tau_n)=f\left(\tilde{\xi}(\alpha_*(k),M,\tau_n)\right)-\sum_{j=0}^{n-1}(\Pi_{(\alpha_*(k),M)}-\mathrm{I})f
\left(\tilde{\xi}(\alpha_*(k),M,\tau_j)\right).\]
Furthermore, for all $f \in \D_*$, the process$\left( S(\alpha_*(k),M,t_n)\right)_n$ is a $\mathcal{F}_n^{\xi(\alpha_*(k),M)}$-
martingale. Then,
 \begin{multline}\label{eq:produitildeSalpha}
\Big\vert\E\Big[\left(\widetilde{S}(\alpha_*(k),M,\lfloor \frac{u_2}{\alpha_*(k)^2}\rfloor \alpha_*(k)^2)-\widetilde{S}(\alpha_*(k),M,\lfloor 
\frac{u_1}{\alpha_*(k)^2}\rfloor \alpha_*(k)^2)\right)\\
 \phi\left(\tilde{\xi}(\alpha_*(k),M,s_1),\, \cdots,\tilde{\xi}(s_d)\right)\Big]\Big|=0,
 \end{multline}
and for all function  $\phi\in \C_\infty([\xi_+,+\infty)^d)$ with compact support, $d\in \N^*$, and all subdivision
 \[0=s_1<s_2<\dots <s_d=u_1.\]
As we are in the Skorohod' space, the almost surely convergence
\[\phi\left(\tilde{\xi}(\alpha_*(k),M,s_1),\,\dots,\, \tilde{\xi}(\alpha_*(k),M,s_d)\right)\longmapsto_{k\to +\infty} \phi\left(\tilde{\xi}(s_1),\,
\dots,\, \widetilde{\xi}(s_d)\right),\]
is satisfied. Then, by statement i) of this Lemma, the product in the expectation of \eqref{eq:produitildeSalpha} converges almost 
surely to 
\[\left(\widetilde{M}(u_2)-\widetilde{M}(u_1) \right)\phi\left(\tilde{\xi}(s_1),\,\dots,\, \tilde{\xi}(s_d)\right)\]
for all function $\phi\in \C^\infty([\xi_+,+\infty[^d)$ with compact support. Moreover, by \eqref{eq:ACfalpha}, the product in the 
expectation of \eqref{eq:produitildeSalpha} is bounded by 
\[ [2\|f\|_\infty+ \widetilde{C}_f(u_2-u_1)]\|\phi\|_\infty.\]
Using  \eqref{eq:produitildeSalpha} and the dominated convergence theorem, we obtain 
\[\E\left[\left(\widetilde{M}(u_2)-\widetilde{M}(u_1) \right)\phi\left(\tilde{\xi}(s_1),\,\dots,\, \tilde{\xi}(s_d)\right)\right]=0.\]
Therefore process $\left( \widetilde{M}(\tau)\right)_{\tau\in [0,1]}$ is  a  $\mathcal{F}^{\tilde{\xi}}_\tau$-martingale.

\end{proof}
\bibliographystyle{alpha} 
\bibliography{biblio} 
\end{document}